\newtheorem{remark}{Remark}[section]
\newcommand{\TheTitle}{Adaptive coarse space for domain decomposition methods for a $P_1$ discontinuous Galerkin multiscale problem}
\newcommand{\TheAuthors}{E. Eikeland, L. Marcinkowski, and T. Rahman}
\title{{\TheTitle}\thanks{This work has partly been supported by the Norwegian Research Council and .}}
\author{
  Erik Eikeland\thanks{Department of Computing, Mathematics and Physics, Western Norway University of Applied Sciences, Norway
    (\email{eeik@hvl.no}, \email{tra@hvl.no}).}
  \and
  Leszek Marcinkowski \thanks{Department of Mathematics, Warsaw University, Poland (\email{lmarcin@mimuw.edu.pl}).}
  \and
  Talal Rahman \footnotemark[2]
}
\begin{document}

\maketitle

\begin{abstract}
In this paper, we propose a two-level overlapping additive Schwarz domain decomposition preconditioner for the symmetric interior penalty discontinuous Galerkin method for the second order elliptic boundary value problem with highly heterogeneous coefficients. A specific feature of this preconditioner is that it is based on adaptively enriching its coarse space with functions created through solving generalized eigenvalue problems on thin patches covering the subdomain interfaces. 
It is shown that the condition number of the underlined preconditioned system is independent of the contrast if an adequate number of functions are used to enrich the coarse space. Numerical results are provided to confirm this claim.
\end{abstract}

\begin{keywords}
  Multiscale problem, Discontinuous Galerkin method, Domain decomposition preconditioner, Multiscale finite element, Generalized eigenvalue problem 
\end{keywords}

\begin{AMS}
  68Q25, 68R10, 68U05
\end{AMS}

\section{Introduction} \label{sec:intro}
We consider the symmetric interior penalty discontinuous Galerkin (SIPG) discretization of the second order elliptic boundary value problem with highly heterogeneous coefficients representing, for instance, the permeability of a porous media in reservoir simulation, and we propose a new additive Schwarz preconditioner for the iterative solution of the resulting system. It is already known in the community that standard domain decomposition methods, in general, have difficulties robustly dealing with the heterogeneity, particularly when they vary highly along subdomain boundaries affecting the Poincar\'e inequality (constant), cf. e.g., \cite{Pechstein:2012:WPI}. This may be explained by saying that a standard coarse space is not rich enough to capture all the worst modes in the residual, and therefore requires some form of enrichment, see for instance \cite{Galvis:2010:DDM,Galvis:2010:DDMR,Efendiev:2012:RDD,Dolean:2012:ATL,Loneland:2015:PEB} for some work on adaptively enriching the coarse space in the continuous Galerkin case. The objective of this paper is to further extend this idea to the discontinuous Galerkin case, by proposing a new and effective coarse problem for the discontinuous Galerkin method, which is based on adaptively enriching the coarse space with functions corresponding to the bad eigenmodes of certain appropriate generalized eigenvalue problems, thereby removing their adverse effect on the convergence.        

The SIPG is a symmetric version of the discontinuous Galerkin (DG), a methodology, which has in the recent years become increasingly popular in the scientific computing community. In contrast to the classical conforming and nonconforming techniques, the DG methods allow for the finite element functions to be entirely discontinuous across the element boundaries, thereby allowing for additional flexibilities with regards to using irregular meshes, local mesh refinement, and different polynomial degrees for the basis functions on different elements, cf. e.g., \cite{Cockburn:2000:DGM,Arnold:2002:UAD,Brezzi:2006:SMD,Riviere:2008:DGM}. DG methods are also preferred when dealing with models based on the laws of conservation, because such methods are locally mass conservative, whereas the classical methods only preserve a global mass balance. For an introduction to the DG methodology see, e.g., \cite{Riviere:2008:DGM}, and for an overview of recent developments in the field we refer to, e.g., \cite{DiPietro:2012:MADG,Karakashian:2013:RDD,Feistauer:2015:DGM} and the references therein. The systems resulting from the SIPG discretization are symmetric and positive definite, and in general very large. Krylov iterations, like the conjugate gradients (CG), are therefore used for the solution of such systems, together with appropriate preconditioners for improved convergence. 

Additive Schwarz preconditioners are among the most popular preconditioners based on the domain decomposition, which are inherently parallel and are easy to implement, cf. \cite{Toselli:2005:DDM, Mathew:2008:DDM,Smith:1996:DDP, Quarteroni:1999:DDM}. However, for most domain decomposition algorithms for the DG that exist to date, it is assumed that the coefficients are either constants or piece wise constants with respect to some partition of the domain, that is, in the latter case, the coefficients may be constant, or mildly varying, inside each subdomain, and have jumps only across subdomain boundaries, cf. e.g. \cite{Gopal:2003:MLDG,Feng:2006:TLASMDG,Dryja:2007:BDDDCMDG,Antonietti:2012:SMP,Brenner:2013:BDD,Dryja:2014:AFETIDPDG,Antonietti:2014:DDP,Antonietti:2015:SPHP,Antonietti:2016:NOS} and references therein. In case of multiscale problems, where the coefficients may vary rapidly and everywhere, in particular when the coefficients vary along subdomain boundaries, such methods are not robust in general; even with what we know as the multiscale finite element coarse space, cf. e.g. \cite{Ma:2012:DDPDG,Du:2014:FETIDPDG} in case of the discontinuous Galerkin, and \cite{Graham:2007:DDM,Pechstein:2008:AFM,Pechstein:2013:FBE} in case of the continuous Galerkin, the condition number still depends on the variation (heterogeneity).
The preconditioner proposed in this paper is based on the abstract Schwarz framework, where nonoverlapping subdomains are used for the local subproblems, and an adaptively enriched coarse space for the coarse problem to get an algorithm that is robust with respect to the variation (heterogeneity). 
Starting with a standard multiscale finite element coarse space, the coarse space is enriched with functions built through solving a set of generalized eigenvalue problems on a set of thin patches each covering a subdomain interface, and including those functions that correspond to the eigenvalues that are below a given threshold. 

Adaptive enrichment of coarse spaces with functions generated through solving certain eigenvalue problems for the construction of robust domain decomposition methods, has started to attract much interest in the recent years, cf. e.g. \cite{Galvis:2010:DDM,Nataf:2010:TLDD,Nataf:2011:CSC,Efendiev:2012:RTD,Dolean:2012:ATL,Spilane:2014:ARC,Loneland:2015:ANH,Dolean:2015:IDD,Eikeland:2016:OSM} for recent applications of the approach to second order elliptic problems with heterogeneous coefficient. There are also results for FETI-DP and BDDC substructuring domain decomposition methods where similar ideas have been used for constructing the coarse space (the primal constraints), cf. \cite{mandel2007adaptive,sousedik2013adaptive,kim2015bddc,KRR:2015:FDMACS,klawonn2016comparison} in 2D and \cite{calvo2015adaptive,kim2016bddc,klawonn:2016:adaptive,oh:2016:BDDC,pechstein2016unified} in 3D.  

The present work is a step in the same direction and is based on the idea of solving lower dimensional eigenvalue problems, that is 1D eigenvalue problems in the 2D, and 2D eigenvalue problems in the 3D, and then appropriately extending the eigenfunctions inside to be included into the coarse space. This idea was first proposed for the additive Schwarz method in \cite{Loneland:2015:ANH} for 2D problems, and later extended to 3D problems in \cite{Eikeland:2016:OSM}, using continuous and piecewise linear finite elements. The present work is an extension of the idea to the discontinuous Galerkin case, proposing a new multiscale finite element space for the DG method, inspired by the work in \cite{Graham:2007:DDM}, and solving a set of relatively small sized eigenvalue problems on thin patches covering the subdomain interfaces in order to deal with the discontinuity of the DG functions. The resulting algorithm is both robust and cost-effective.

The rest of the paper is organized as follows. The discrete formulation of the problem based on the SIPG is given in section~\ref{sec:Discrete}; in section~\ref{sec:Prec} we introduce the new additive Schwarz preconditioner for the discrete problem. The convergence analysis is given in section~\ref{sec:Proofs}, while the numerical results are given in section~\ref{sec:NumRes}. 

Throughout this paper, we use the following notations: $x\lesssim y$ and $w\gtrsim z$ denote that there exist positive constants $c,C$ independent of mesh parameters and the jump of coefficients such that $x\leq c y$ and $w\geq C z$, respectively.

\section{Discrete problem}\label{sec:Discrete}
We consider the following linear variational problem: Find $u^*\in H_0^1(\Omega)$ such that
\begin{equation} \label{eq:modelproblem}
 a(u^*,v)=f(v) \quad \forall v \in H^1_0(\Omega),
\end{equation}
where $\Omega$ is a polygonal domain on the plane and  $a(u,v)=\int_\Omega \alpha \nabla u\cdot \nabla v \: d x$ is a symmetric and positive definite bilinear form for all $u, v \in H^1_0(\Omega)$. Here $\alpha\in L^{\infty}(\Omega)$ is a positive function, and there exists $C_0>0$ such that $\alpha(x) \geq C_0$. For simplicity, since $\alpha$ can be scaled by $C_0^{-1}$, we assume that $\alpha\geq 1$, which, however, may be highly varying.  

We assume that there exists a sequence of  quasi-uniform triangulations (cf. \cite{Brenner:2008:MTF,Braess:2007:FE}) of $\Omega$, $\mathcal{T}_h=\mathcal{T}_h(\Omega)=\{\tau\}$, consisting of triangles with the parameter $h=\max_{\tau \in {\cal T}_h} diam(\tau)$ tending to zero. 

 Since we consider only piecewise linear FEM functions, see below,  we assume that $\alpha$ is constant on each $\tau\in {\cal T}_h$,  without any loss of generality. 

Let $\mathcal{E}_h$ denote the set of all edges of the triangles of ${\cal T}_h$, which can be split into two subsets, $\mathcal{E}_h^\partial$, the set consisting of edges that are on the boundary $\partial\Omega$, and $\mathcal{E}_h^0$, the set consisting of edges that are in the interior of $\Omega$. Let ${\cal V}_h$ denote the set of all vertices of the triangles of ${\cal T}_h$, and ${\cal V}_h(\tau)$ the set of vertices of the triangle $\tau$. 

For each edge $e\in \mathcal{E}_h^0$, the common edge between two neighboring triangles $\tau_+$ and $\tau_-$, we introduce the following weights,  cf. \cite{Dryja:2003:ODG,Burman:2006:DDM,Z.Cai:2011:DGF},
\begin{eqnarray*}
\omega^e_+=\alpha_-/(\alpha_++\alpha_-) \quad\mbox{and}\quad \omega^e_-=\alpha_+/(\alpha_++\alpha_-),
\end{eqnarray*}
where $\alpha_+$ and $\alpha_-$ are the restrictions of $\alpha$ to $\tau_+$ and $\tau_-$, respectively. Note that
 \begin{eqnarray*}
   \omega_+^e + \omega_-^e = 1.
 \end{eqnarray*} 
Further, we introduce the following notations. For each $e\subset \mathcal{E}_h^0$, define
\begin{equation}
[u] = u_+\: n_+ + u_- \: n_- \quad\mbox{and}\quad
\{u\} =  \omega^e_+ \: u_+  + \omega^e_-\:  u_-,
\end{equation}
where $u_+$ and $u_-$ are the traces of $u_{|\tau_+}$ and $u_{|\tau_-}$ on $e$, respectively, while
$n_+$ and $n_-$ are the unit outer normal to $\partial \tau_+$ and $\partial\tau_-$, respectively.
And, for each edge $e\subset\mathcal{E}_h^\partial$, with $e$ being an edge of some $\tau$,
we define
\begin{eqnarray}
[u]=u\: n  \quad \mbox{and} \quad \{u\}= u,
\end{eqnarray}
where $n$ is the unit outer normal to $e\subset \partial \tau$, and $u$ is the trace of $u_{|\tau}$ onto $e$.

We define the $L^2$ inner products over the elements and the edges respectively as follows,  
$$
 (u,v)_{{\cal T}_h}=\sum_{\tau\in {\cal T}_h} (u,v)_\tau \quad \mbox{and} \quad 
 (u,v)_{\mathcal{E}_h}=\sum_{e\in \mathcal{E}_h} (u,v)_e
$$
for $u,v \in L^2(\Omega)$. 

We consider a family of standard piecewise linear finite element spaces $V^h\subset L^2(\Omega)$, built on a family of triangulations $\{{\cal T}_h\}$, given as
$$
 V^h=\{u\in L^2(\Omega): \forall \tau \in {\cal T}_h \:u_{|\tau}\in P_1(\tau)\},
$$
where $P_1(\tau)$ is the space of linear polynomials defined over the element $\tau$.
Since, for a linear function, its gradient being constant over $\tau$, we have that $\int_\tau \alpha \nabla u \cdot \nabla v \: d x= (\nabla u \cdot \nabla v) \int_\tau \alpha \: d x$. Thus we see that our  assumption on $\alpha$ is natural here.

We can now introduce a family of discrete problems, based on the symmetric interior penalty Galerkin (SIPG) method as follows. Find $u_h^*\in V^h$ 
\begin{equation}\label{eq:IP_dp}
   a(u_h^*,v)=f(v) \qquad \forall v \in V^h,
\end{equation}
where 
$$
 a(u,v)=(\alpha \nabla u,\nabla v)_{{\cal T}_h} - (\{\alpha \nabla u\}[v])_{\mathcal{E}_h} -(\{\alpha \nabla v\}[u])_{\mathcal{E}_h} + \gamma  (S_h[u],[v])_{\mathcal{E}_h},
$$
for all $u,v \in V^h$,  $S_h$ is a piecewise constant function over the edges of ${\cal E}_h$,
and $\gamma=\mbox{constant}>0$ is a penalty term, cf. \cite{Z.Cai:2011:DGF}. $S_h$ when restricted to $e\in {\cal E}_h^0$,  is defined as follows, cf. \cite{Dryja:2003:ODG,Burman:2006:DDM,Z.Cai:2011:DGF},
$$
  S_{h|e}=h_e^{-1} (\omega^e_+\alpha_+ + \omega^e_-\alpha_-)=
  h_e^{-1}\frac{2}{\frac{1}{\alpha_+}+\frac{1}{\alpha_-}} \quad \mbox{on} \quad \overline{e}=\partial\tau_+\cap\partial\tau_-,
$$
with $h_e$ being the length of the edge $e \in \mathcal{E}_h$. 
With the harmonic average satisfying
\begin{equation}
 \alpha_{min} \leq \frac{2}{\frac{1}{\alpha_+}+\frac{1}{\alpha_-}} \leq 2 \alpha_{min} \quad\mbox{where}\quad \alpha_{min}=\min(\alpha_+,\alpha_-),
\end{equation}
we get
\begin{equation}\label{eq:est-Sh}
   h_e^{-1}\alpha_{min} \lesssim S_{h|e}\lesssim   h_e^{-1}\alpha_{min}.
\end{equation}
For $e\in {\cal E}_h^{\partial}$ we set 
$$
 S_{h|e}=h_e^{-1}\alpha_{|\tau}.
$$
If penalty parameters is large enough than the discrete problem has a unique solution, cf. Lemma~2.3 in \cite{Z.Cai:2011:DGF}.

We use the standard local nodal basis on each triangle $\tau\in {\cal T}_h$ to represent a function $u \in V^h$. Hence, $u\in V^h$ restricted to a triangle $\tau\in {\cal T}_h$ can be represented as $u_{|\tau}=\sum_{x\in \mathcal{V}_h(\tau)} u(x)\phi_x^\tau$
where $\mathcal{V}_h(\tau)$ is the set of vertices of $\tau$, and $\phi_x^\tau$ is a linear basis function defined over $\tau$ such that it takes the value one at the vertex $x$ and zero at the other two vertices of $\tau$. $\phi_x^\tau$ extends to the rest of the domain as zero. Consequently, any function $u\in V^h$ is represented as 
$$
 u =\sum_{\tau \in {\cal T}_h} \sum_{x\in \mathcal{V}(\tau)} u(x)\phi_x^\tau.
$$
In this basis, the degrees of freedom (dofs) are the function values associated with the vertices in the set
$\overline{\Omega}_h := \{ \mathcal{V}_h(\tau): \tau\in {\cal T}_h(\Omega) \}$. We call this set the discontinuous Galerkin or the DG vertex or node set. Vertices (or nodes) occupying the same geometric space may have different function values associated with them if the triangles they belong to are different, and therefore appear in the set $\overline{\Omega}_h$ with indices of their respective triangles. So, each geometric vertex $x$ may correspond to one or several DG vertices (nodes). We use $\nu(x)$ to denote the set of all DG vertices (nodes) associated with $x$.


Below, we introduce the bilinear form $\hat a(\cdot,\cdot)$, and restate its equivalence to the bilinear form $a(\cdot,\cdot)$ in Lemma \ref{lem:equivBF} (cf. Lemma~2.3 in \cite{Z.Cai:2011:DGF}).
For $u,v\in V^h$ let
\begin{eqnarray}
 \hat{a}(u,v)&=&(\alpha \nabla u,\nabla v)_{{\cal T}_h} + (S_h[u],[v])_{\mathcal{E}_h}
   \qquad.
\end{eqnarray}

\begin{lemma}\label{lem:equivBF}
The norms induced by the bilinear forms $a(\cdot,\cdot)$ and $\hat{a}(\cdot,\cdot)$ 
are equivalent with constants independent of the mesh parameter $h$ or the coefficient $\alpha$ 
if the penalty term is larger then a positive constant which is dependent only on the geometry of
all triangles in the triangulation and is independent of $h$ and the contrast $\alpha$.
\end{lemma}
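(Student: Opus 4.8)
The plan is to establish the two-sided bound $c_1\,\hat a(u,u)\le a(u,u)\le c_2\,\hat a(u,u)$ for every $u\in V^h$ with $c_1,c_2>0$ independent of $h$ and $\alpha$; since $a$ and $\hat a$ are both symmetric and positive definite (for $\hat a$, vanishing of $\hat a(u,u)=(\alpha\nabla u,\nabla u)_{{\cal T}_h}+(S_h[u],[u])_{{\cal E}_h}$ forces $\nabla u\equiv0$ elementwise and $[u]\equiv 0$ on every edge, hence $u\equiv 0$), this two-sided bound is precisely the asserted equivalence of the induced norms. Expanding $a(u,u)=(\alpha\nabla u,\nabla u)_{{\cal T}_h}-2(\{\alpha\nabla u\}[u])_{{\cal E}_h}+\gamma\,(S_h[u],[u])_{{\cal E}_h}$, the only terms distinguishing $a$ from $\hat a$ are the consistency flux terms, so the entire argument reduces to controlling the single bilinear quantity $(\{\alpha\nabla u\}[v])_{{\cal E}_h}$ by the two energy quantities above.

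The technical core is the contrast-robust weighted trace estimate
\[
 \sum_{e\in{\cal E}_h} S_{h|e}^{-1}\,\|\{\alpha\nabla u\}\|_{L^2(e)}^2 \;\lesssim\; (\alpha\nabla u,\nabla u)_{{\cal T}_h},
\]
with hidden constant $C_{tr}$ depending only on the shape regularity of ${\cal T}_h$. I would prove it by exploiting the matched weights of the scheme: since $\omega_+^e\alpha_+=\omega_-^e\alpha_-=\alpha_+\alpha_-/(\alpha_++\alpha_-)=:H_e$ and $S_{h|e}=2h_e^{-1}H_e$, one has $\{\alpha\nabla u\}=H_e(\nabla u_+ +\nabla u_-)$ on each $e\in{\cal E}_h^0$. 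Because $u$ is piecewise linear, $\nabla u_\pm$ is constant on $\tau_\pm$, so $\|\nabla u_\pm\|_{L^2(e)}^2=h_e\,|\nabla u_\pm|^2$; substituting and using $H_e\le\min(\alpha_+,\alpha_-)\le\alpha_\pm$ together with the quasi-uniformity bound $|\tau_\pm|\gtrsim h_e^2$, the factor $H_e$ cancels against $S_{h|e}^{-1}$ and each edge term is bounded by $\|\alpha^{1/2}\nabla u\|_{L^2(\tau_+)}^2+\|\alpha^{1/2}\nabla u\|_{L^2(\tau_-)}^2$. Summing over edges (each triangle occurring a bounded number of times) gives the claim; boundary edges, where $S_{h|e}=h_e^{-1}\alpha_{|\tau}$ and $\{\alpha\nabla u\}=\alpha_{|\tau}\nabla u_{|\tau}$, are handled by the identical one-sided computation.

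Granting this with constant $C_{tr}$, an edgewise Cauchy--Schwarz split, $|(\{\alpha\nabla u\}[v])_e|\le(S_{h|e}^{-1}\|\{\alpha\nabla u\}\|_{L^2(e)}^2)^{1/2}(S_{h|e}\|[v]\|_{L^2(e)}^2)^{1/2}$, followed by a discrete Cauchy--Schwarz over ${\cal E}_h$, yields the continuity estimate $|(\{\alpha\nabla u\}[v])_{{\cal E}_h}|\le C_{tr}^{1/2}\,(\alpha\nabla u,\nabla u)_{{\cal T}_h}^{1/2}(S_h[v],[v])_{{\cal E}_h}^{1/2}$. Applying it with $v=u$ and Young's inequality to the doubled cross term gives $a(u,u)\ge\tfrac12(\alpha\nabla u,\nabla u)_{{\cal T}_h}+(\gamma-2C_{tr})(S_h[u],[u])_{{\cal E}_h}$, which is $\gtrsim\hat a(u,u)$ exactly when $\gamma$ exceeds the threshold $2C_{tr}$ by a fixed margin --- this is the ``penalty large enough'' hypothesis, and $C_{tr}$ depends only on the triangle geometry. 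The reverse inequality $a(u,u)\le c_2\hat a(u,u)$ follows from the same continuity estimate and Young's inequality with no smallness condition, completing the equivalence.

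I expect the only genuinely delicate point to be the \emph{contrast independence} of $C_{tr}$, which is why the estimate deserves separate treatment: it rests entirely on the harmonic-type weighting $\omega_\pm^e$ being matched to the penalty $S_h$. With a naive arithmetic average the low-coefficient side would be weighted by the large coefficient and the trace constant would degrade like the contrast $\max\alpha/\min\alpha$; the cancellation $\omega_\pm^e\alpha_\pm=H_e\le\alpha_\pm$ is exactly what eliminates this dependence. Everything else is routine trace and inverse estimates for $P_1$ functions, whose constants carry the geometry-only dependence recorded in the statement.
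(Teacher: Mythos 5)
Your proposal is correct, but it is worth noting that the paper itself contains no proof of this lemma at all: it is stated as a citation of Lemma~2.3 (and Section~2.4) of the reference by Cai, Ye and Zhang, and the accompanying remark explicitly declines to reproduce the coercivity argument because the precise constant there is ``a bit technical.'' What you have written is essentially the standard self-contained proof that the cited reference formalizes: the whole matter reduces to the contrast-robust weighted trace bound $\sum_e S_{h|e}^{-1}\|\{\alpha\nabla u\}\|_{L^2(e)}^2 \lesssim (\alpha\nabla u,\nabla u)_{\mathcal{T}_h}$, and your verification of it is sound --- the identities $\omega_\pm^e\alpha_\pm = \alpha_+\alpha_-/(\alpha_++\alpha_-)=H_e$ and $S_{h|e}=2h_e^{-1}H_e$, together with $H_e\le\min(\alpha_+,\alpha_-)$, the constancy of $\nabla u_\pm$ for $P_1$ elements, and shape regularity ($|\tau_\pm|\gtrsim h_e^2$), do give an edge-by-edge bound whose constant depends only on triangle geometry, and the subsequent Cauchy--Schwarz/Young steps yielding coercivity for $\gamma$ above the threshold $2C_{tr}$ and boundedness without any threshold are routine and correctly executed. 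You also correctly identify the harmonic-average weighting as the crux of contrast independence, which is precisely why this particular choice of $\omega_\pm^e$ and $S_h$ appears in the discretization. What your route buys is a short, self-contained argument with an explicit (if crude) threshold $2C_{tr}$; what the paper's deferral to Cai--Ye--Zhang buys is a sharper, carefully quantified coercivity constant in terms of the geometry of the individual triangles, which the authors judged too technical to restate. Either is acceptable as a proof of the lemma as stated.
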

\begin{remark}
The upper bound of Lemma~\ref{lem:equivBF} is relatively easy to show; the difficult part is to show the lower bound, that is the bilinear form $a(\cdot,\cdot)$ is coercive in the norm induced by the bilinear form $\hat{a}(\cdot,\cdot)$. In Section~2.4 of \cite{Z.Cai:2011:DGF}), this coercivity constant is given in a very precise way; however the formula is a bit technical, and therefore we do not present it here.
\end{remark}

\section{Additive Schwarz method}\label{sec:Prec}
Let $\overline{\Omega}=\bigcup_{k=1}^N \overline{\Omega}_k$ be the non-overlapping decomposition of $\Omega$ into disjoint open substructures $\Omega_k, \ k=1,\ldots,N$, each being a sum of fine triangles $\tau\in{\cal T}_h$ and edges $e_h\in{\cal E}_h$.

Let $\Gamma_{kl}$ be the open interface between $\Omega_k$ and $\Omega_l$, with $\overline{\Gamma}_{kl} = \partial\Omega_k\cap\partial\Omega_l$ for $k\not=l$ being its closed interface. The geometric point or vertex where the interfaces meet are called the crosspoint and is typically denoted by $c_r$. Note that $\nu(c_r)$ denotes the set of DG vertices or nodes associated with the crosspoint $c_r$.  
Now, define $\Gamma$, the global interface, as the sum of all closed interfaces.

For each subdomain interface $\Gamma_{kl}$, we define a patch, denoted by $\Gamma_{kl}^{\delta}$, as the sum of all triangles having at least a vertex on $\Gamma_{kl}$. Note that $\Gamma_{kl}$ is $\overline{\Gamma}_{kl}$ minus the crosspoints. For the ease of explanation, we assume that the patches are disjoint in the sense that they may share a vertex (a crosspoint) geometrically, but not a triangle; this is however not necessary in practice, cf. Remark~\ref{remark:overlappatch}. Each patch $\Gamma_{kl}^{\delta}$ has two disjoint subpatches, $\Gamma_{kl}^{{\delta},i}=\Gamma_{kl}^{\delta}\cap\Omega_i$ for $i=k,l$, giving
$
 \Gamma_{kl}^{\delta}=\Gamma_{kl}^{{\delta},k}\cup\Gamma_{kl}^{{\delta},l}.
$
The sum of all subpatches belonging to a subdomain $\Omega_k$, called the discrete boundary layer of $\Omega_k$ and denoted by $\Omega_k^{\delta}$, is defined as
$$
 \Omega_k^{\delta}=\bigcup_{\Gamma_{k l} \subset \partial\Omega_k \cap \Gamma} 
       \Gamma_{k l}^{{\delta},k}.     
$$ 

\begin{figure}[!ht]
 \centerline{
 \SetLabels
 \L\B (.15*.67)  $\Omega_4$ \\
 \L\B (.65*.67)  $\Omega_3$ \\
 \L\B (.15*.10)  $\Omega_1$ \\
 \L\B (.65*.10)  $\Omega_2$ \\
 \L\B (.39*.62)  $\Gamma_{34}^{\delta}$ \\
 \L\B (.62*.39)  $\Gamma_{23}^{\delta}$ \\
 \L\B (.39*.15)  $\Gamma_{12}^{\delta}$ \\
 \L\B (.17*.39)  $\Gamma_{41}^{\delta}$ \\
 \endSetLabels
 \AffixLabels{
 \includegraphics[height=5.5cm, width=6cm]{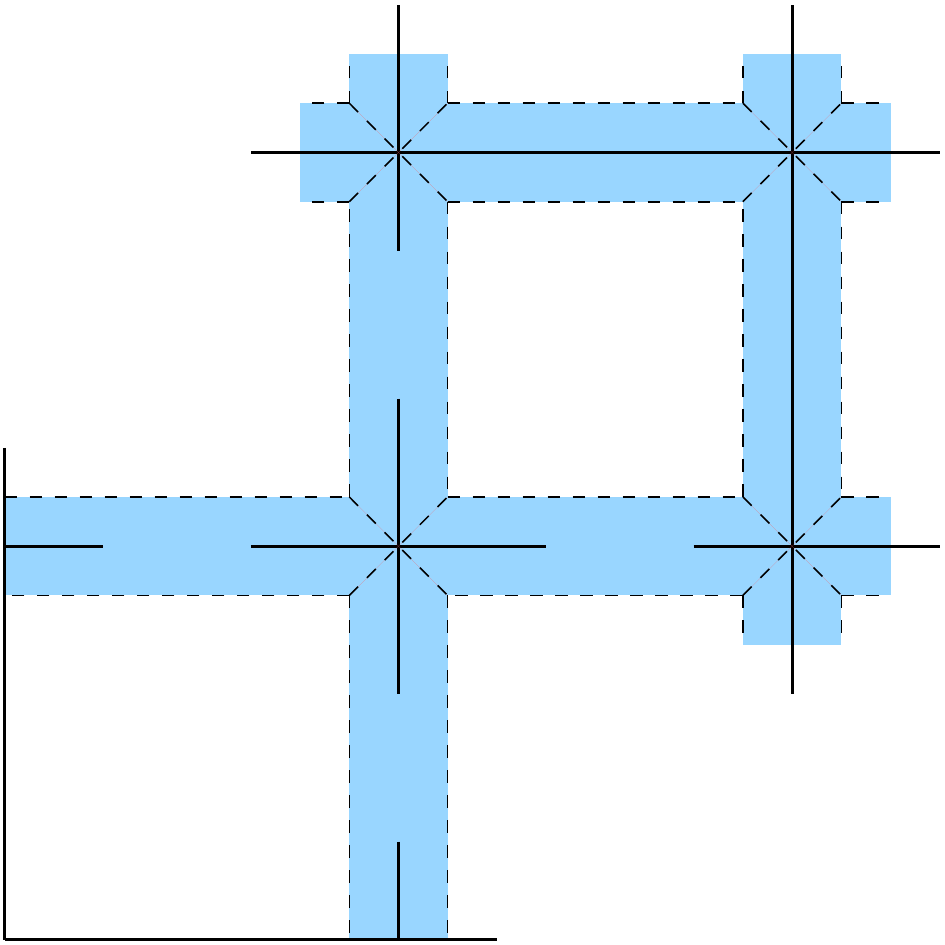}}
 }
 \caption{Illustrating disjoint patches (shaded regions) for both interior and boundary subdomain. Each patch covers an interface between two neighboring subdomains, extending to the boundary, as shown.}\label{fig:patches}
\end{figure}

Let $V_k$ be the subspace of $V^h$ of functions that are zero over all the fine triangles that are in  $\overline{\Omega}_l$ where $l\neq k$, and let $V_{k,0}$ be the subspace of $V_k$ of functions that are zero on the boundary layer $\Omega_k^{\delta}$,
that is
$$
 V_{k,0}=\left\{u\in V_k: \ u(x)=0 \ \ \forall x\in {\tau}, \ \tau \subset \Omega_k^{\delta} \right\}.
$$
The bilinear form ${a}(\cdot,\cdot)$ is positive definite over both $V_k$ and $V_{k,0}$. We now introduce an orthogonal  projection $\mathcal{P}_k:V^h\rightarrow V_{k,0}$ as follows,
\begin{equation} \label{eq:dh-pr}
 {a}(\mathcal{P}_ku,v)= {a}(u,v) \quad \forall v \in V_{k,0}.
\end{equation}

Let  $\mathcal{P} =\sum_{k=1}^N \mathcal{P}_k$.
Since the supports of functions in $V_{k,0}$ and $V_{l,0}$ are disjoint for $k\not =l$ and functions from these spaces are zero on edges contained  in $\Gamma$, the images of $\mathcal{P}_ku$ and $\mathcal{P}_l u$ are orthogonal in any of the bilinear forms $a(\cdot,\cdot)$ and $\hat{a}(\cdot,\cdot)$.

Further, let $\mathcal{H} =I-\mathcal{P} : V^h\rightarrow V^h$ be the corresponding discrete harmonic extension operator. This operator has the minimization property stating that
\begin{equation}\label{eq:min_prop_dhe}
 {a}(\mathcal{H} u,\mathcal{H} u)=\min_{u\in V^h}\left\{a(u,u): \; u(x)=\mathcal{H} u(x) \; \; \forall x\in \mathcal{V}_h(\tau), \; \tau \subset \bigcup_k \Omega_k^{\delta}\right\}.
\end{equation}

\subsection{Local spaces}
For the additive Schwarz decomposition, let $V_k$ be the local subspace associated with the subdomain $\Omega_k$, giving that 
$$ 
  V^h=\sum_{k=1}^N V_k \quad\mbox{with}\quad V_k\cap V_l =\{0\} \ \ \mbox{for} \ k\not=l.
$$
We note here that, even when two neighboring subdomains share an edge, their subspaces may not necessarily be orthogonal to each other in the inner product induced by the two bilinear forms $a(\cdot,\cdot)$ and ${\hat a}(\cdot,\cdot)$. This is because of the term
\begin{equation}
  \sum_{e \subset \Gamma_{k l}} \int_e S_h [u][v]\:d s \qquad \forall u\in V_k, v\in V_l,
\end{equation}
being present in the inner product on both subspaces when $\Gamma_{kl}$ is nonempty. In this sense, the method can be considered as an overlapping Schwarz method with the minimal overlap.

\subsection{Coarse space}
In this section, we introduce our coarse space which consists of two components, a spectral component, and a non-spectral component. The way the components are to be built play an important role in making our method robust and effective. 

Let $V^h(\Gamma_{k l}^{\delta})=\left\{u\in V^h: \; u_{|\tau}=0, \; \tau\not\subset \Gamma_{k l}^{\delta}\right\}$ be the space of functions that are equal to zero on all elements which do not belong to the patch $\Gamma_{k l}^{\delta}$.
\begin{remark}\label{remark:overlappatch}
 In a more general case when two patches  $\Gamma_{k l}^\delta$ and $\Gamma_{k j}^\delta$ sharing a crosspoint $c_r$ are not disjoint, i.e. there exists a fine triangle $\tau$ with an edge $e_1$ on $\Gamma_{k l}$ and another edge $e_2$ on $\Gamma_{k j}$, we need to slightly modify the definition of the two spaces $V^h(\Gamma_{k l}^\delta)$ and $V^h(\Gamma_{k j}^\delta)$. We simply add an extra condition to their functions, namely by saying that for any function $u\in V^h(\Gamma_{k l}^\delta)$, $u$ takes zero on $\partial \tau \cap \Gamma_{k j}=\overline{e}_2$, and for any function $u\in V^h(\Gamma_{k j}^\delta)$, $u$ takes zero on $\partial \tau \cap \Gamma_{k l}=\overline{e}_1$.
\end{remark}
On $V^h(\Gamma_{k l}^{\delta})$, we introduce two symmetric bilinear forms $a_{k l}(u,v)$ and $b_{k l}(u,v)$  as follows: 
\begin{eqnarray}
 a_{k l}(u,v)&:=& \sum_{\tau \subset \Gamma_{ k l}^{\delta}} \int_\tau\alpha \nabla u \cdot \nabla v \: d x + 
 \sum_{e \subset \Gamma_{k l}^{\delta}\cup(\partial\Omega\cap\partial{\Gamma}_{k l}^{\delta})}
  S_h\int_e [u][v] \:d s, \\
  b_{k l}(u,v)&:= &h^{-2} (u,v)_{L_\alpha^2(\Gamma_{k l}^\delta)}=h^{-2}\int_{\Gamma_{k l}^{\delta}}\alpha\: u \: v \: d x.
\end{eqnarray}
The second sum in the bilinear form $a_{k l}(\cdot,\cdot)$ is taken over all edges of the triangles $\tau \subset \Gamma_{k l}^{\delta}$, which are either in the interior of the patch $\Gamma_{k l}^{\delta}$ or on the boundary $\partial\Omega\cap\partial{\Gamma}_{k l}^{\delta}$.
Edges that lie on the boundary of the patch and are in the interior of a subdomain at the same time are not in this sum.

Note that for $u,v\in V^h(\Gamma_{k l}^{\delta})$
$$
a_{k l}(u,v) = \hat{a}(u,v) - \sum_{e \subset \partial\Gamma_{k l}^\delta\setminus\partial\Omega}
  S_h\int_e [u][v] \:d s. 
$$
In cases where $\partial\Omega \cap \overline{\Gamma}_{k l}^{\delta}=\emptyset$ this form is only positive semi-definite over the space $V^h(\Gamma_{k l}^{\delta})$. Therefore we introduce $V_v(\Gamma_{k l}^{\delta})\subset V^h(\Gamma_{k l}^{\delta})$, the subspace of $V^h(\Gamma_{k l}^{\delta})$ of functions which are equal to zero at the nodes of $\nu(c_r)$, where $c_r$ are the crosspoints which are typically the endpoints of $\Gamma_{k l}$. For all functions in this subspace we can make the following proposition
\begin{proposition}\label{prop:aklpatchformSPD}
The bilinear form $a_{k l}(u,v)$, where $u,v \in V_v(\Gamma_{k l}^{\delta})$, is symmetric and positive definite.
\end{proposition}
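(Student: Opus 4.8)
The plan is to read off symmetry and positive semidefiniteness directly from the structure of $a_{kl}$, and then reduce positive definiteness to a kernel computation whose only nontrivial ingredient is a connectivity argument on the patch. Symmetry is immediate: the element term $\sum_{\tau \subset \Gamma_{kl}^{\delta}} \int_\tau \alpha\,\nabla u \cdot \nabla v\,dx$ and the edge term $\sum_e S_h\int_e [u][v]\,ds$ are each invariant under interchanging $u$ and $v$, since both $\nabla u \cdot \nabla v$ and $[u][v]$ are symmetric. For positive semidefiniteness I would set $v=u$ to get
\[
  a_{kl}(u,u)=\sum_{\tau \subset \Gamma_{kl}^{\delta}}\int_\tau \alpha\,|\nabla u|^2\,dx+\sum_{e}S_h\int_e |[u]|^2\,ds\ge 0,
\]
using $\alpha\ge 1$ and $S_h>0$ (cf.\ \eqref{eq:est-Sh}).

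It then remains to prove definiteness on $V_v(\Gamma_{kl}^{\delta})$, i.e.\ that $a_{kl}(u,u)=0$ forces $u=0$. Since both sums above are nonnegative, each must vanish separately. The first forces $\nabla u\equiv 0$ on every $\tau\subset\Gamma_{kl}^{\delta}$, so $u$ is constant on each element, say $u_{|\tau}=c_\tau$; the second forces $[u]=0$ on every edge $e$ entering the sum. I would then propagate constancy: for an interior patch edge $e=\partial\tau_+\cap\partial\tau_-$ one has $[u]=(c_{\tau_+}-c_{\tau_-})\,n_+=0$, hence $c_{\tau_+}=c_{\tau_-}$. Regarding the elements of $\Gamma_{kl}^{\delta}$ as the nodes of a graph whose arcs are exactly the interior patch edges retained in the sum (these include the interface edges joining the two subpatches $\Gamma_{kl}^{\delta,k}$ and $\Gamma_{kl}^{\delta,l}$), connectedness of this graph yields a single common value $u\equiv c$ on the whole patch.

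Finally I would pin down $c=0$ in two cases. If $\partial\Omega\cap\overline{\Gamma}_{kl}^{\delta}\neq\emptyset$, any boundary edge $e$ in the sum has $[u]=u\,n$, so $[u]=0$ gives $u=0$ on $e$ and hence $c=0$. Otherwise $\overline{\Gamma}_{kl}^{\delta}$ is interior, and I invoke the defining property of $V_v(\Gamma_{kl}^{\delta})$: a crosspoint $c_r$ (an endpoint of $\Gamma_{kl}$) is a vertex of some patch element $\tau$, and $u$ vanishes at the node of $\nu(c_r)$ carried by $\tau$; since $u_{|\tau}\equiv c$ there, we get $c=0$. In either case $u\equiv 0$, which establishes positive definiteness.

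The step I expect to be the genuine obstacle is the connectivity claim. I must verify that the arcs kept in the jump sum — interior patch edges together with the $\partial\Omega$ edges, but \emph{excluding} edges on the patch boundary that lie in a subdomain interior — actually connect every element of $\Gamma_{kl}^{\delta}$ to an element that reaches the boundary or a constrained crosspoint node. The delicate configurations are the ``fan'' elements that touch $\Gamma_{kl}$ only at a crosspoint and the modified spaces of Remark~\ref{remark:overlappatch} for non-disjoint patches; one must check that such corner elements do not form components disconnected from the constrained nodes, which is where the geometry of the quasi-uniform triangulation near the crosspoints enters the argument.
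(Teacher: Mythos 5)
Your proof is correct and takes essentially the same route as the paper's: symmetry is immediate, and definiteness follows by showing the kernel is trivial --- vanishing gradients make $u$ piecewise constant, vanishing jumps make it a single constant over the whole patch, and the crosspoint constraint built into $V_v(\Gamma_{kl}^{\delta})$ forces that constant to be zero (so your separate case for patches meeting $\partial\Omega$ is redundant, though harmless). The connectivity issue you flag as the main obstacle is in fact benign: since $\Gamma_{kl}$ excludes the crosspoints, ``fan'' elements touching the interface only at a crosspoint do not belong to the patch at all, and the patch elements that remain are edge-connected via the fans around each vertex of $\Gamma_{kl}$ together with the interface edges joining the two subpatches, which is exactly the step the paper passes over with ``hence $u$ is constant over the patch.''
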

\begin{proof}
Clearly the bilinear form is symmetric. To prove that it is positive definite, we only need to show that $a_{k l}(u,u)=0$ if and only if $u=0$. Let $u\in V_v(\Gamma_{k l}^{\delta})$, and $a_{k l}(u,u)$ be equal to zero, hence $\nabla u$ is zero over each triangle $\tau \subset \Gamma_{k l}^{\delta}$ which means that $u$ is piecewise constant over $\Gamma_{k l}^{\delta}$. We also have, for each interior edge $e$, $\int_e [u]^2\: d s =0$. Hence, $u$ is constant over the patch $\Gamma_{k l}^{\delta}$, consequently, $u$ is zero at the vertices of $\nu(c_r)$ for each crosspoint $c_r$ of $\Gamma_{k l}$. By the definition of $\Gamma_{k l}$ it contains at least one crosspoint. This yields that $u=0$ over the patch. Since $V_v(\Gamma_{k l}^{\delta})$ is finite dimensional we get that the form is positive definite over this space.  
\end{proof}

We first define the non-spectral component of the coarse space. It is constructed in the similar fashion as the standard multiscale finite element space is constructed. We call this component a multiscale component, and denote it by $V_{ms}$. The functions $u\in V_{ms}$ are determined by its values at the DG vertices $\nu(c_r)$ of the crosspoints $c_r$. $V_{ms}\subset V^h$ is the space of functions $u$ which satisfy the condition that
\begin{equation}\label{eq:def-Vms}
 a_{k l}(u,v)=0 \quad \forall v \in V_v(\Gamma_{k l}^{\delta}),
\end{equation} 
over each patch $\Gamma_{k l}^{\delta}$, which is guaranteed by Proposition~\ref{prop:aklpatchformSPD}, and that they are discrete harmonic in the sense that $u=\mathcal{H}u$ as defined in Section \ref{sec:Prec}.

The spectral component of the coarse space is based on solving a generalized eigenvalue problem locally on each patch, which is defined as follows. On each patch $\Gamma_{k l}^{\delta}$, find all pairs $(\lambda_j^{k l},\psi_j^{k l})
\in R_+\times V_v(\Gamma_{k l}^{\delta})$ such that
\begin{equation}
  a_{k l}(\psi_j^{k l},v)=\lambda_j^{k l} b_{k l}(\psi_j^{k l},v)
  \qquad \forall v \in V_v(\Gamma_{k l}^{\delta})
\end{equation}
and $\|\psi_j^{k l}\|_{b_{kl}}=1$, with $\|\psi_j^{k l}\|_{b_{kl}}$ as the norm induced by the bilinear form $b_{k l}(u,v)$.

We assume that the eigenvalues are indexed in the increasing order, with $\lambda$ being a number between the $M$-th and the ($M+1$)-th lowest eigenvalues, as
\begin{equation} \label{eq.threshold}
  0 < \lambda_1^{k l}\leq \lambda_2^{k l}\leq \ldots \leq \lambda_M^{kl} \leq \lambda \leq \lambda_{M+1}^{kl} \leq \ldots \leq \lambda_{N_{k l}}^{k l}
\end{equation}
where $N_{k l}=dim(V_v(\Gamma_{k l}^{\delta}))$.
We define $\Pi_M^{k l}:V_v(\Gamma_{k l}^{\delta})\rightarrow V_v(\Gamma_{k l}^{\delta})$ as the $b_{k l}$-form orthogonal projection  onto the space 
$span(\psi_1^{k l},\ldots,\psi_M^{k l})$, as
\begin{equation}\label{eq:def_PI_kl}
  \Pi_M^{k l} u = \sum_{j=1}^M b_{k l}(u,\psi_j^{k l}) \psi_j^{k l}.
\end{equation}
with $0\leq M \leq dim(V_v(\Gamma_{k l}^{\delta}))$. The integer parameter
$M=M(\Gamma_{k l})$ is either preset or chosen automatically by setting a threshold for the eigenvalues. Our estimates below  will depend on the choice of $M$ for the patches.
 
We are now ready to introduce the spectral component; it is the sum of patch subspaces $V_{k l}^{eig,M}, \ \Gamma_{kl}\subset\Gamma$, of $V^h$, defined as the following.
\begin{equation}
 V_{k l}^{eig,M}=span(\Psi_1^{k l},\ldots,\Psi_M^{k l}),
\end{equation}
where $\Psi_j^{k l}$ is the extension of $\psi_j^{k l}$, first as zero on the triangles that are on the boundary layers minus the patch $\Gamma_{k l}^{\delta}$, and then as discrete harmonic further inside the subdomains in the sense as described in Section \ref{sec:Prec}.
The functions of this space have a support which is the union of the patch $\Gamma_{k l}^{\delta}$ and the interior of both $\Omega_k$ and $\Omega_l$, as shown in Figure~\ref{fig:support}.


\begin{figure}[!ht]
 \centerline{
 \SetLabels
 \L\B (.25*.45)  $\Omega_k$ \\
 \L\B (.65*.45)  $\Omega_l$ \\
 \L\B (.49*.50)  $\Gamma_{kl}^{\delta}$ \\
 \endSetLabels
 \AffixLabels{
 \includegraphics[height=5cm, width=10cm]{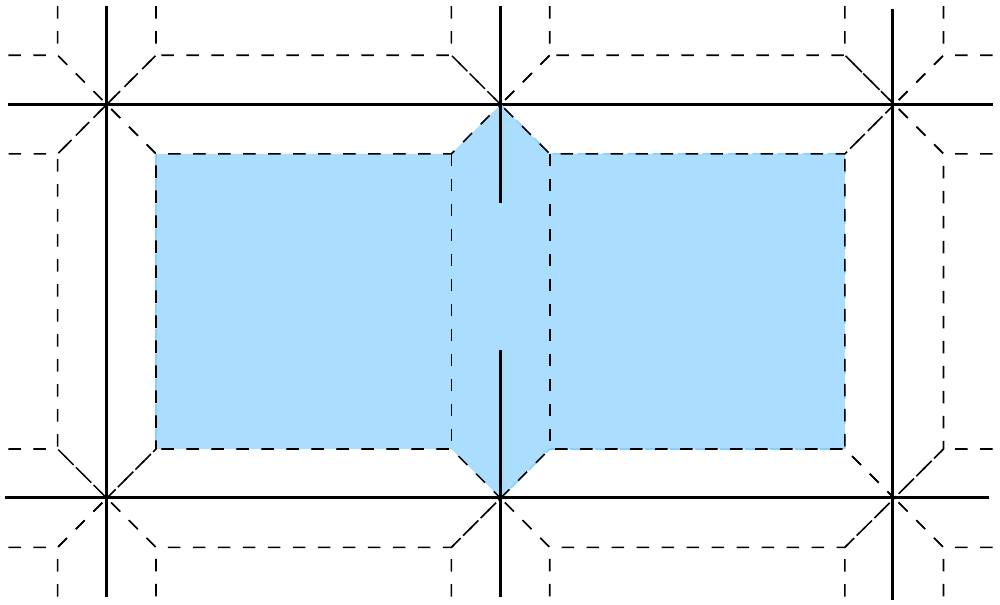}}
 }
 \caption{Illustrating the support of $V_{k l}^{eig,M}$, shown as shaded area which includes the patch $\Gamma^{\delta}_{kl}$ and the interiors of both $\Omega_k$ and $\Omega_l$.}\label{fig:support}
\end{figure} 

The coarse space is defined as the sum of $V_{ms}$, the non-spectral multiscale component, and $\{V_{k l}^{eig,M}\}$, the spectral component, as follows,
\begin{equation}
V_0= V_{ms}+\sum_{\Gamma_{k l} \subset \Gamma}  V_{k l}^{eig,M},
\end{equation}
in other words the coarse space is a multiscale like coarse space enriched with patch spectral subspaces.

\subsection{Preconditioned system}
We define the coarse and the local projection like operators $\{T_k\}_{k=0}^N$ as $T_k:V^h\rightarrow V_k$, $k=0,1,\ldots, N,$ satisfying  
\begin{equation} \label{eq:DDProj}
 {a}(T_k u,v)=a(u,v) \qquad \forall v \in V_k,
\end{equation}
and the corresponding additive Schwarz operator $T$ as 
\begin{equation}
 T=T_0 + \sum_{k=1}^N T_k.
\end{equation}
Now, following the Schwarz framework, cf. e.g. \cite{Toselli:2005:DDM}, the discrete formulation (\ref{eq:IP_dp}) can be written equivalently as
\begin{equation}
 T u^*_h =g,
\end{equation}
which is a preconditioned version of the original system, where $g=\sum_{k=0}^Ng_k$ and $g_k=T_ku^*_h$.

\begin{remark}\label{rem:inex-b-f}
 If we replace the exact bilinear form $a(u,v)$ by the inexact bilinear form $\hat{a}(u,v)$, on the left of the equality in the definition of $T_k$, cf. (\ref{eq:DDProj}), we will get a second variant of the preconditioner with inexact solvers for the sub problems, but a similar convergence estimate as the exact version.
\end{remark}

\subsection{Condition number}
We get the following condition number bound for the preconditioned system.
\begin{theorem}\label{thm:Cond_est}
Let $M=M(\Gamma_{kl})$ be the number of enrichment used on each subdomain interface $\Gamma_{kl}$ between two neighboring subdomains $\Omega_k$ and $\Omega_l$. Then for any $u\in V^h$, the following is true for the additive Schwarz operator $T$, i.e.
\begin{equation}
 \left(1+\max_{\Gamma_{k l}\subset \Gamma}\frac{1}{\lambda^{k l}_{M+1}}\right)^{-1} a(u,u) \lesssim
 a(T u, u) \lesssim a(u,u),
\end{equation}
where $\lambda^{kl}_{M+1}$ is the $M+1$-st lowest eigenvalue of the generalized eigenvalue problem defined on the thin patch associated with the interface $\Gamma_{kl}$.
\end{theorem}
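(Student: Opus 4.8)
The plan is to invoke the abstract additive Schwarz framework (cf.\ \cite{Toselli:2005:DDM}), under which
$$
 C_0^{-2}\, a(u,u) \;\le\; a(Tu,u) \;\le\; \omega\,\rho(\mathcal{E})\, a(u,u),
$$
where $C_0^2$ is the constant of a stable splitting $u=\sum_{k=0}^N u_k$, $u_k\in V_k$, with $\sum_k a(u_k,u_k)\le C_0^2\, a(u,u)$, the parameter $\omega$ measures the local stability of the exact projections $T_k$ of \eqref{eq:DDProj} (so $\omega=1$), and $\rho(\mathcal{E})$ is the spectral radius of the subspace interaction matrix. Matching this to the statement, the upper inequality will supply the right-hand bound with a constant absorbed into $\lesssim$, while the lower inequality must reproduce the factor $\bigl(1+\max_{\Gamma_{kl}}1/\lambda_{M+1}^{kl}\bigr)$ as $C_0^2$.

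For the upper bound I would argue that each $T_k$, $k\ge 1$, is the $a$-orthogonal projection onto $V_k$, whence $\omega=1$, and it only remains to bound $\rho(\mathcal{E})$. Since the decomposition $\Omega=\bigcup\Omega_k$ is nonoverlapping and the local spaces couple solely through the penalty/jump term $\sum_{e\subset\Gamma_{kl}}\int_e S_h[u][v]\,ds$ on shared interfaces (the minimal-overlap coupling noted after \eqref{eq:IP_dp}), a coloring of the subdomains with a number of colors bounded by a geometric constant shows $\rho(\mathcal{E})=O(1)$; the single coarse space $V_0$ adds one more, giving $a(Tu,u)\lesssim a(u,u)$.

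The lower bound is the heart of the proof and needs an explicit stable splitting. First I write $u=\mathcal{H}u+\mathcal{P}u=\mathcal{H}u+\sum_{k=1}^N\mathcal{P}_k u$ using \eqref{eq:dh-pr}. The interior parts $\mathcal{P}_k u\in V_{k,0}\subset V_k$ are mutually $a$-orthogonal (disjoint supports, vanishing on $\Gamma$), so $\sum_k a(\mathcal{P}_k u,\mathcal{P}_k u)=a(\mathcal{P}u,\mathcal{P}u)\le a(u,u)$ and cost nothing beyond the energy. It remains to split $w:=\mathcal{H}u$, which I do patchwise. On each patch $\Gamma_{kl}^\delta$ I subtract the multiscale interpolant $I_{ms}w\in V_{ms}$ matching $w$ at the crosspoint nodes $\nu(c_r)$ and satisfying the variational condition \eqref{eq:def-Vms}; by Proposition~\ref{prop:aklpatchformSPD} the remainder $w-I_{ms}w$ lies in $V_v(\Gamma_{kl}^\delta)$, so the spectral decomposition applies. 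I then split it with the $b_{kl}$-orthogonal projection $\Pi_M^{kl}$ of \eqref{eq:def_PI_kl}: the bad-mode part $\Pi_M^{kl}(w-I_{ms}w)$ goes, after extension by zero on the other boundary layers and discrete harmonically into the interiors, into $V_{kl}^{eig,M}\subset V_0$, while the good part $z:=(I-\Pi_M^{kl})(w-I_{ms}w)$ is localized into the adjacent $V_k$. The decisive estimate is the Rayleigh-quotient bound on the complement of the first $M$ eigenfunctions,
$$
 b_{kl}(z,z)\;=\;h^{-2}\!\int_{\Gamma_{kl}^\delta}\!\alpha\, z^2\, dx \;\le\; \frac{1}{\lambda_{M+1}^{kl}}\,a_{kl}(z,z),
$$
which converts the $h^{-2}$-weighted $L^2$ cost incurred when cutting $z$ off at the patch boundary into an energy term with the gain $1/\lambda_{M+1}^{kl}$. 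Summing coarse, bad-mode, and good-mode contributions over all patches, using $a_{kl}(v,v)\le\hat a(v,v)$ (from the identity preceding Proposition~\ref{prop:aklpatchformSPD}) together with Lemma~\ref{lem:equivBF} and the finite overlap of patches, yields $\sum_k a(u_k,u_k)\lesssim \bigl(1+\max_{\Gamma_{kl}}1/\lambda_{M+1}^{kl}\bigr)a(u,u)$, i.e.\ the claimed $C_0^2$.

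The main obstacle is the DG-specific bookkeeping in this last step: one must verify that cutting the global discrete harmonic $w$ into patch pieces and re-extending them (as zero across the minimal overlap and then discrete harmonically) is energy stable, so that the energy of each localized piece is controlled by the \emph{local} patch form $a_{kl}$ rather than by the global form with its $h$-dependent penalty. Care is needed because $a_{kl}$ omits the jump contributions on $\partial\Gamma_{kl}^\delta\setminus\partial\Omega$, and because patches meeting at a crosspoint share the node set $\nu(c_r)$ (cf.\ Remark~\ref{remark:overlappatch}); controlling the jump terms generated at the cut edges, and confirming that the localization cost is \emph{exactly} $h^{-2}\|z\|_{L^2}^2=b_{kl}(z,z)$ so that the spectral bound applies with the stated eigenvalue $\lambda_{M+1}^{kl}$, is the crux of the argument.
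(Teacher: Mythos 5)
Your proposal follows essentially the same route as the paper: the abstract Schwarz framework with a coloring argument for the upper bound, and a stable decomposition built from the multiscale interpolant $I_{ms}$, the spectral projection $\Pi_M^{kl}$ (whose complement supplies the $1/\lambda_{M+1}^{kl}$ gain), and subdomain restrictions of the remainder --- your pieces $\mathcal{P}_k u + (\mathcal{H}u - I_0u)_{|\overline{\Omega}_k}$ coincide with the paper's $u_k=(u-I_0u)_{|\overline{\Omega}_k}$, since $(\mathcal{P}u)_{|\overline{\Omega}_k}=\mathcal{P}_k u$. The ``DG bookkeeping'' you correctly flag as the crux is precisely what the paper's technical lemmas carry out: Lemma~\ref{lem:IPinvineq} and Corollary~\ref{cor:disharmest} bound the energy of the re-extended pieces, including the jump terms generated at cut edges, by $h^{-2}$-weighted $L^2$ norms over the boundary layers (i.e.\ by the $b_{kl}$-norms), and Lemmas~\ref{lem:est-patchen-glenrg}--\ref{lem:local-est} close the argument exactly as you anticipate.
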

The proof of this theorem is given in the next section.

\section{Proof of Theorem~\ref{thm:Cond_est}}\label{sec:Proofs}

The proof is based on the abstract Schwarz framework, see \cite{Smith:1996:DDP,Toselli:2005:DDM,Mathew:2008:DDM} for more details.
Accordingly, there are three key assumptions that need to be verified; these are the assumptions on the stability of the decomposition, the strengthened Cauchy-Schwarz inequality between the local subspaces, and the local stability of the inexact bilinear forms if any. The second assumption is verified using a simple coloring argument. The third assumption needs to be verified if $\hat{a}(\cdot,\cdot)$ is used instead of the exact bilinear form $a(\cdot,\cdot)$, cf. Remark~\ref{rem:inex-b-f}, in which case it is a simple consequence of the equivalence between the two as given in Lemma~\ref{lem:equivBF}.
We are then left with the assumption on the stability of the decomposition, which needs to be verified, and which is shown in Lemma~\ref{lem:stable_dec} below.

We need a few technical tools. The first one is the following set of local inverse inequalities, cf. Lemma \ref{lem:IPinvineq}.
\begin{lemma}\label{lem:IPinvineq}
Let $u$ be a function such that $u\in  V^h$. Then for any $\tau \in {\cal T}_h$ or $e \in \mathcal{E}_h$ we have
\begin{eqnarray*}
 \int_{\tau} \alpha |\nabla u |^2\: d x &\lesssim & h^{-2} \int_{\tau} \alpha | u |^2 \: d x,\\
 \int_e  S_h [u]^2 \: d s &\lesssim& h^{-2} \int_{\tau_+} \alpha_+ | u |^2 \: d x+h^{-2} \int_{\tau_-} \alpha_- | u |^2 \: d x,   \quad e\in \mathcal{E}_h^0,\\
  \int_e  S_h [u]^2 \: d s &\lesssim& h^{-2} \int_{\tau} \alpha | u |^2 \: d x,  \quad e\in \mathcal{E}_h^\partial.
\end{eqnarray*}
where $e \in \mathcal{E}_h^0$ in the second inequality, is the edge shared by the elements $\tau_+$ and $\tau_-\in {\cal T}_h$.
\end{lemma}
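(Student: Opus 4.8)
The plan is to establish these three local inverse inequalities by reducing everything to the single element level and exploiting that $u$ restricted to any triangle $\tau$ is a linear polynomial, so that standard scaling arguments on a reference element apply. Throughout I will use the assumption (stated earlier) that $\alpha$ is constant on each $\tau$, which lets me pull $\alpha$ out of the integrals over an element and reduce each estimate to a geometry-only statement about linear polynomials.

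For the first inequality, $\int_\tau \alpha|\nabla u|^2\,dx \lesssim h^{-2}\int_\tau \alpha|u|^2\,dx$, I would note that since $\alpha_{|\tau}$ is constant it factors out of both sides, leaving the classical inverse inequality $\|\nabla u\|_{L^2(\tau)}^2 \lesssim h^{-2}\|u\|_{L^2(\tau)}^2$ for linear functions. Strictly this needs a little care, because on a single element the bound $\|\nabla u\|_{L^2(\tau)} \lesssim h^{-1}\|u\|_{L^2(\tau)}$ fails for functions with nonzero mean (a nonzero constant has zero gradient but large $L^2$ norm, so the inequality is in the harmless direction); the quantitative constant comes from the standard scaling to the reference triangle $\hat\tau$, using $\hat u = u\circ F_\tau$ with $F_\tau$ the affine map, the norm equivalences $\|\nabla u\|_{L^2(\tau)}^2 \sim \|\nabla\hat u\|_{L^2(\hat\tau)}^2$ and $\|u\|_{L^2(\tau)}^2 \sim h^2\|\hat u\|_{L^2(\hat\tau)}^2$ valid under quasi-uniformity, together with the finite-dimensional norm equivalence $\|\nabla\hat u\|_{L^2(\hat\tau)} \lesssim \|\hat u\|_{L^2(\hat\tau)}$ on $P_1(\hat\tau)$. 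The quasi-uniformity of $\{\mathcal{T}_h\}$ guarantees the hidden constants depend only on the shape-regularity, not on $h$ or $\alpha$.

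For the second inequality, over an interior edge $e \in \mathcal{E}_h^0$, I would first use the earlier bound \eqref{eq:est-Sh}, namely $S_{h|e} \lesssim h_e^{-1}\alpha_{\min}$, so that $\int_e S_h[u]^2\,ds \lesssim h_e^{-1}\alpha_{\min}\int_e [u]^2\,ds$. Expanding $[u]=u_+n_++u_-n_-$ and using $|[u]|^2 \lesssim |u_+|^2 + |u_-|^2$ reduces matters to estimating $\int_e |u_\pm|^2\,ds$ for the traces from each neighbor. For each side I would apply a scaled trace-type inverse inequality for linear polynomials, $\int_e |u_\pm|^2\,ds \lesssim h_e^{-1}\int_{\tau_\pm}|u|^2\,dx$, obtained again by scaling to the reference configuration; multiplying by $h_e^{-1}\alpha_{\min} \le h_e^{-1}\alpha_\pm$ and using $h_e \sim h$ yields the two volume terms on the right-hand side, with $\alpha_{\min}$ bounded by each of $\alpha_+$ and $\alpha_-$ so that the coefficient can be absorbed into the respective element integral. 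The boundary-edge case $e\in\mathcal{E}_h^\partial$ is the same argument with only one adjacent element and the single-trace formula $[u]=u\,n$, together with $S_{h|e}=h_e^{-1}\alpha_{|\tau}$.

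I do not expect a genuine obstacle here — all three statements are routine scaling/inverse estimates — but the step requiring the most care is the second one, where two effects must be tracked simultaneously: the factor $h_e^{-1}$ coming from $S_h$ must be combined correctly with the $h_e^{-1}$ generated by the trace inverse inequality to produce the total $h^{-2}$ scaling, and the harmonic-average coefficient $\alpha_{\min}$ must be distributed to the correct element so that each volume term carries its own coefficient $\alpha_+$ or $\alpha_-$. The key enabling facts are the piecewise-constancy of $\alpha$ per element, the quasi-uniformity giving $h_e \sim h$, and estimate \eqref{eq:est-Sh}; once these are in place the inequalities follow from reference-element norm equivalences with constants independent of $h$ and of the contrast.
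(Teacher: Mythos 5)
Your proof is correct and follows essentially the same route as the paper's own (much terser) proof: the first bound is the classical local inverse inequality for polynomials, and the two edge bounds follow from a discrete trace inequality obtained by scaling to the reference element combined with the estimate $h_e S_{h|e}\leq 2\min\{\alpha_+,\alpha_-\}$ from \eqref{eq:est-Sh}. One remark: your parenthetical that the inverse inequality ``fails'' for functions with nonzero mean is a misstatement---it holds trivially for constants because the gradient vanishes (you seem to be thinking of the Poincar\'e inequality, which is the reverse direction)---but, as you yourself note, this is harmless and does not affect the argument.
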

\begin{proof}
The first inequality is the classical local inverse inequality, cf. e.g. \cite{Brenner:2008:MTF}.
The last two inequalities follow from the trace theorem over $e$, a scaling argument, and the fact that 
$h_eS_{h|_e}\leq 2 \min\{\alpha_+,\alpha_-\}$ for $e \in \mathcal{E}_h^0$, cf. (\ref{eq:est-Sh}).
\end{proof}

The next technical tool is a corollary of Lemma~\ref{lem:IPinvineq}.
\begin{corollary}\label{cor:disharmest}
Let $u=\mathcal{H} u\in V^h$ be a discrete harmonic function as defined in Section \ref{sec:Prec}. Then
\begin{equation} \label{eq:CorDH}
{a}(u,u)\lesssim 
\sum_{k=1}^N h^{-2}\|\alpha^{1/2} u\|_{L^2(\Omega_k^{\delta})}^2.
\end{equation}
\end{corollary}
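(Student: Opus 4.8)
The plan is to exploit the energy-minimization property \eqref{eq:min_prop_dhe} of the discrete harmonic extension together with the local inverse inequalities of Lemma~\ref{lem:IPinvineq}. Since $u=\mathcal{H}u$, the value of $u$ at every DG vertex belonging to a triangle in $\bigcup_k\Omega_k^{\delta}$ is fixed, and among all functions in $V^h$ sharing these boundary-layer values, $u$ is the one of least energy. I would therefore introduce a competitor $w\in V^h$ defined by $w(x)=u(x)$ for every DG vertex $x$ of a triangle $\tau\subset\bigcup_k\Omega_k^{\delta}$, and $w(x)=0$ at every remaining DG vertex. By \eqref{eq:min_prop_dhe} this yields immediately $a(u,u)\le a(w,w)$, so it suffices to estimate $a(w,w)$.

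By construction $w$ is admissible, and since the boundary layers $\Omega_k^{\delta}$ are unions of whole fine triangles, $w$ coincides with $u$ on every triangle of $\bigcup_k\Omega_k^{\delta}$ and vanishes identically on every triangle outside it (each such triangle carries only DG vertices set to zero). Using Lemma~\ref{lem:equivBF} to pass from $a(\cdot,\cdot)$ to $\hat a(\cdot,\cdot)$, I would then write $a(w,w)\lesssim\hat a(w,w)=(\alpha\nabla w,\nabla w)_{\mathcal{T}_h}+(S_h[w],[w])_{\mathcal{E}_h}$ and bound the two pieces separately. The element term involves only triangles in the boundary layers, and the first inequality of Lemma~\ref{lem:IPinvineq} gives $\int_\tau\alpha|\nabla w|^2\lesssim h^{-2}\int_\tau\alpha|u|^2$ for each such $\tau$; summing reproduces the right-hand side $\sum_k h^{-2}\|\alpha^{1/2}u\|_{L^2(\Omega_k^{\delta})}^2$.

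For the penalty term I would use the second and third inequalities of Lemma~\ref{lem:IPinvineq}: every edge jump $\int_e S_h[w]^2$ is controlled by $h^{-2}$ times the weighted $L^2$ mass of $w$ on the adjacent triangle(s). The only edges contributing are those touching a boundary-layer triangle, and on a triangle lying outside the boundary layer $w$ vanishes, so each contribution is controlled by $h^{-2}\int_\tau\alpha|u|^2$ with $\tau\subset\bigcup_k\Omega_k^{\delta}$; since each triangle has three edges the total multiplicity is bounded, and summation again gives $\sum_k h^{-2}\|\alpha^{1/2}u\|_{L^2(\Omega_k^{\delta})}^2$. Combining the two estimates proves \eqref{eq:CorDH}.

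The step I expect to require the most care is the treatment of the jump terms on the interface between the boundary layer and the interior, where $w$ jumps discontinuously from $u$ to zero. One must check that these jumps are still absorbed into the boundary-layer norm, which they are precisely because the inverse inequality of Lemma~\ref{lem:IPinvineq} distributes the edge integral over both adjacent elements and the zero element contributes nothing; one must also verify that the boundary layers $\Omega_k^{\delta}$, being disjoint across $k$ (the subpatches $\Gamma_{kl}^{\delta,k}$ lie in disjoint subdomains) and overlapping only with bounded multiplicity near crosspoints within a single subdomain, count each triangle a uniformly bounded number of times, so that the per-triangle bounds assemble into \eqref{eq:CorDH} with a constant independent of $h$ and $\alpha$.
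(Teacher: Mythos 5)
Your proposal is correct and follows essentially the same route as the paper: the paper's proof also builds the competitor (there called $\hat{u}$, equal to $u$ at the boundary-layer DG vertices and zero elsewhere), invokes the minimization property \eqref{eq:min_prop_dhe} together with Lemma~\ref{lem:equivBF}, and then applies the local inverse inequalities of Lemma~\ref{lem:IPinvineq} element by element and edge by edge, with the same case analysis for jump edges (interior to a layer, on $\Gamma_{kl}$, and on the layer/interior boundary where $\hat u$ drops to zero). Your closing remarks on the DG cut-off being exact on whole triangles and on the bounded multiplicity when summing over subdomains match the paper's treatment of its estimate \eqref{eq:edgeterm}.
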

\begin{proof}
Let $\hat{u}\in V^h$ be equal to $u$ on all the boundary layers, that is the vertices of $\Omega_k^{\delta}$ for $k=1,\ldots,N$, and be extended by zero further inside the subdomains, out of the boundary layers.
 
By the minimizing property of the discrete harmonic extension, cf. (\ref{eq:min_prop_dhe}), and Lemma~\ref{lem:equivBF}, we get
\begin{eqnarray*}
 |u|^2_a&\leq& |\hat{u}|^2_a \lesssim |\hat{u}|^2_{\hat{a}} =\sum_{k=1}^N \left( \sum_{\tau \subset \Omega_k^{\delta}} 
  \|\alpha^{1/2}\nabla \hat{u}\|_{L^2(\tau)}^2 + \sum_{e\subset \overline{\Omega}_k^{\delta}}\int_e S_h[\hat{u}]^2\: ds \right) .
\end{eqnarray*}
By the first inequality of Lemma~\ref{lem:IPinvineq}, the first sum above can be estimated by the square of  $L^2$ norm of $\hat{u}$ scaled by $h^{-2}$  over all boundary layers. Thus, since $\hat{u}=u$ on the boundary layers, we get
\begin{equation}\label{eq:proof_cor_inv}
 \sum_{\tau\subset \Omega_k^{\delta}} \|\alpha^{1/2}\nabla \hat{u}\|_{L^2(\tau)}^2 \lesssim 
 \sum_{\tau\subset \Omega_k^{\delta}} 
  h^{-2}\|\alpha^{1/2}\hat{u} \|_{L^2(\tau)}^2=h^{-2}\|\alpha^{1/2}{u} \|_{L^2(\Omega_k^{\delta})}^2.
\end{equation}
For the edge term, $\sum_{e\subset \overline{\Omega}_k^{\delta}}\int_e S_h [\hat{u}]^2\: d s$, we estimate it by estimating its edge integral term separately for each case of $e$ in the sum. 

Let $e\subset\overline{\Omega}_k^{\delta}$ be an edge of some $\tau\subset\Omega_k^{\delta}$, and be lying on $\partial{\Omega}_k^{\delta}\cap \partial \Omega$, then by the third inequality of Lemma~\ref{lem:IPinvineq}, we estimate the edge integral as
$$ 
 \int_e S_h [\hat{u}]^2\: d s \lesssim h^{-2}\int_\tau\alpha \hat{u}^2\: d x.
$$
Let $e\subset\overline{\Omega}_k^{\delta}$ be the edge common to two triangles $\tau_+$ and $\tau_-$, then by
the second  inequality of Lemma~\ref{lem:IPinvineq}, we get
$$ 
 \int_eS_h [\hat{u}]^2\: d s \lesssim h^{-2}\int_{\tau_+\cup\tau_-}\alpha \hat{u}^2\: d x.
$$
There are three cases to consider in this case. In the first, both $\tau_+\subset \Omega_k^{\delta}$ and $\tau_-\subset \Omega_k^{\delta}$. The edge integral is then bounded by the sum of the squares of the $L^2$ norm scaled by $h^{-2}$ over the two triangles. In the second, $e\subset\Gamma_{k l}$, in which case one of the two triangles, $\tau_+$ and $\tau_-$, is in the boundary layer $\Omega_k^{\delta}$ and the other one in the boundary layer $\Omega_l^{\delta}$. The edge integral is then bounded by the sum of the squares of the $L^2$ norm scaled by $h^{-2}$ over the two triangles. 
Finally, the case when $e \subset \partial\Omega_k^{\delta}\setminus\partial\Omega_k$, in which case we have
$\tau_+\subset \Omega_k^{\delta}$ and $\tau_-\subset \Omega_k\setminus\Omega_k^{\delta}$ with $\hat{u}=0$ on $\tau_-$. We estimate the edge integral as
$$ 
 \int_eS_h [u]^2\: d s \lesssim h^{-2}\int_{\tau_+\cup\tau_-}\alpha \hat{u}^2\: d x=
 h^{-2}\int_{\tau_+}\alpha \hat{u}^2\: d x, 
$$ 
which is again the square of the  $L^2$ norm scaled by $h^{-2}$ over the triangle in the boundary layer $\Omega_k^{\delta}$.
Now adding all contributions from the edge integrals, and noting that $\hat{u}=u$ on the boundary layers, we can estimate the edge term as follows. 
\begin{equation} \label{eq:edgeterm}
 \sum_{e\subset \overline{\Omega}_k^{\delta}} \int_e S_h[\hat{u}]^2\: ds \lesssim
  h^{-2} \|\alpha^{1/2}{u}\|^2_{L^2(\Omega_k^{\delta})} + 
 \sum_{\Gamma_{k l} \subset \Gamma\cap \partial\Omega_k} h^{-2} \|\alpha^{1/2}{u}\|^2_{L^2(\Omega_l^{\delta})}.
\end{equation}
Further, adding (\ref{eq:edgeterm}) to (\ref{eq:proof_cor_inv}) and summing over the subdomains we get the right hand side of (\ref{eq:CorDH}), and the proof then follows.
\end{proof}

The following result can be obtained through a standard algebraic reasoning, see for instance \cite{Galvis:2010:DDMR,Spilane:2014:ARC,Loneland:2015:ANH} for similar applications.
\begin{lemma}\label{lem:PIM-est}
Let $\Pi_M^{k l}$ be as defined in (\ref{eq:def_PI_kl}), then
it is both the $b_{k l}$- and $a_{k l}$-orthogonal projection onto same subspace, and
\begin{eqnarray*}
 \|u-\Pi_M^{k l}u\|_{a_{k l}}\leq \|u\|_{a_{k l}}, \qquad 
 \|\Pi_M^{k l}u\|_{a_{k l}}\leq \|u\|_{a_{k l}},
\end{eqnarray*}
and
\begin{eqnarray*}
  \|u-\Pi_M^{k l}u\|_{b_{k l}}^2 &\leq& \frac{1}{\lambda_{M+1}^{kl}}\|u-\Pi_M^{k l}u\|_{a_{k l}}^2. 
\end{eqnarray*}
\end{lemma}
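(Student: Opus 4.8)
The plan is to exploit the simultaneous diagonalization of the two symmetric forms $a_{k l}$ and $b_{k l}$ furnished by the generalized eigenvalue problem, after which all three claims reduce to elementary manipulations with the spectral coefficients. First I would record that, since $a_{k l}(\cdot,\cdot)$ is symmetric positive definite on $V_v(\Gamma_{k l}^\delta)$ by Proposition~\ref{prop:aklpatchformSPD} and $b_{k l}(\cdot,\cdot)$ is a positive definite weighted $L^2$ form, the eigenfunctions $\{\psi_j^{k l}\}_{j=1}^{N_{k l}}$ constitute a basis of $V_v(\Gamma_{k l}^\delta)$ that is simultaneously $b_{k l}$-orthonormal and $a_{k l}$-orthogonal. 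The orthogonality for distinct eigenvalues follows by testing the eigen-relation with $\psi_i^{k l}$ and invoking symmetry, which gives $(\lambda_i^{k l}-\lambda_j^{k l})\,b_{k l}(\psi_i^{k l},\psi_j^{k l})=0$; within a degenerate eigenspace one performs a $b_{k l}$-Gram--Schmidt step, which preserves $a_{k l}$-orthogonality because $a_{k l}=\lambda\,b_{k l}$ there. This yields $b_{k l}(\psi_i^{k l},\psi_j^{k l})=\delta_{ij}$ and $a_{k l}(\psi_i^{k l},\psi_j^{k l})=\lambda_i^{k l}\delta_{ij}$.

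Next I would expand an arbitrary $u\in V_v(\Gamma_{k l}^\delta)$ in this basis as $u=\sum_{j=1}^{N_{k l}} c_j\psi_j^{k l}$ with $c_j=b_{k l}(u,\psi_j^{k l})$, so that by the definition \eqref{eq:def_PI_kl} we have $\Pi_M^{k l}u=\sum_{j=1}^{M}c_j\psi_j^{k l}$ and $u-\Pi_M^{k l}u=\sum_{j=M+1}^{N_{k l}}c_j\psi_j^{k l}$. To show that $\Pi_M^{k l}$ is also the $a_{k l}$-orthogonal projection onto the same span, I would test $u-\Pi_M^{k l}u$ against $\psi_i^{k l}$ for $i\leq M$ and use $a_{k l}(\psi_j^{k l},\psi_i^{k l})=\lambda_i^{k l}\delta_{ij}$ together with $j\geq M+1>i$ to obtain $a_{k l}(u-\Pi_M^{k l}u,\psi_i^{k l})=0$; since $\Pi_M^{k l}u$ lies in the target span, this is precisely the defining property of the $a_{k l}$-orthogonal projection.

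The two stability bounds then follow from Parseval-type identities: with $\|u\|_{a_{k l}}^2=\sum_j \lambda_j^{k l}c_j^2$, discarding either the first $M$ or the last $N_{k l}-M$ nonnegative terms gives $\|\Pi_M^{k l}u\|_{a_{k l}}^2\leq\|u\|_{a_{k l}}^2$ and $\|u-\Pi_M^{k l}u\|_{a_{k l}}^2\leq\|u\|_{a_{k l}}^2$. For the final estimate I would compare the two tail sums directly: since $\lambda_j^{k l}\geq\lambda_{M+1}^{k l}$ for every $j\geq M+1$, one has $\|u-\Pi_M^{k l}u\|_{a_{k l}}^2=\sum_{j>M}\lambda_j^{k l}c_j^2\geq\lambda_{M+1}^{k l}\sum_{j>M}c_j^2=\lambda_{M+1}^{k l}\|u-\Pi_M^{k l}u\|_{b_{k l}}^2$, which rearranges to the claimed inequality. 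There is no genuine obstacle here; the only point requiring care is the treatment of repeated eigenvalues in the simultaneous-diagonalization step, and the remainder is bookkeeping with the orthonormal spectral coefficients.
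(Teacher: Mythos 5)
Your proof is correct, and it is exactly the ``standard algebraic reasoning'' the paper invokes: the paper itself gives no proof of Lemma~\ref{lem:PIM-est}, merely citing \cite{Galvis:2010:DDMR,Spilane:2014:ARC,Loneland:2015:ANH}, and your simultaneous $b_{k l}$-orthonormal/$a_{k l}$-orthogonal diagonalization with the tail-sum comparison is precisely that standard argument (including the correct handling of repeated eigenvalues). No gaps; the only remark is that the two stability bounds also follow at once from the Pythagorean identity for the $a_{k l}$-orthogonal projection, which slightly shortens the bookkeeping.
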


The next lemma states certain properties of the patch bilinear form $a_{k l}(u,v)$.
\begin{lemma} \label{lem:est-patchen-glenrg}
Let $u\in V^h$ and $u_{k l}=u_{|\overline{\Gamma}_{k l}^h}$  be its restriction to the 
patch $\overline{\Gamma}_{k l}^\delta$  extended by zero outside. Then
\begin{equation} \label{eq:est-patchen-glenrg}
 a_{k l}(u_{k l},u_{k l})\leq \sum_{\tau \subset \Omega_k\cup\Omega_l}\int_\tau \alpha |\nabla u|^2\: d x 
 + \sum_{e\subset \overline{\Omega}_k\cup \overline{\Omega}_l} \int_e S_h [u]^2\: d s.
\end{equation} 
\end{lemma}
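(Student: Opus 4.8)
The plan is to split $a_{kl}(u_{kl},u_{kl})$ into its volumetric (gradient) contribution and its edge (penalty) contribution, treat each separately, and observe that on the sets over which these sums run, $u_{kl}$ coincides with $u$, so that both pieces are dominated termwise by the corresponding pieces of the right-hand side of \eqref{eq:est-patchen-glenrg}.

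First I would handle the gradient term. By definition $u_{kl}$ equals $u$ on every triangle $\tau\subset\Gamma_{kl}^\delta$ and vanishes elsewhere, so $\nabla u_{kl}=\nabla u$ on each such $\tau$ and hence
\[
\sum_{\tau\subset\Gamma_{kl}^\delta}\int_\tau\alpha\,|\nabla u_{kl}|^2\,dx
=\sum_{\tau\subset\Gamma_{kl}^\delta}\int_\tau\alpha\,|\nabla u|^2\,dx.
\]
Since the patch decomposes as $\Gamma_{kl}^\delta=\Gamma_{kl}^{\delta,k}\cup\Gamma_{kl}^{\delta,l}$ with $\Gamma_{kl}^{\delta,i}\subset\Omega_i$, every patch triangle lies in $\Omega_k\cup\Omega_l$; as all summands are nonnegative, the above is bounded by $\sum_{\tau\subset\Omega_k\cup\Omega_l}\int_\tau\alpha\,|\nabla u|^2\,dx$, the first term in \eqref{eq:est-patchen-glenrg}.

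The edge term is where the bookkeeping matters, and I expect it to be the only real obstacle. The edge sum defining $a_{kl}$ runs over $e\subset\Gamma_{kl}^\delta\cup(\partial\Omega\cap\partial\Gamma_{kl}^\delta)$, which, by the discussion following the definition of $a_{kl}$, excludes precisely the edges on $\partial\Gamma_{kl}^\delta\setminus\partial\Omega$ (edges sitting on the patch boundary but interior to a subdomain). This exclusion is exactly what I need: for every edge $e$ remaining in the sum, each triangle adjacent to $e$ lies in $\Gamma_{kl}^\delta$ (for an interior edge both $\tau_+,\tau_-$; for a boundary edge $e\subset\partial\Omega$ the unique adjacent triangle). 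On every adjacent triangle $u_{kl}=u$, so the jump satisfies $[u_{kl}]=[u]$ on each retained $e$. Were the excluded edges kept, one adjacent triangle would carry the zero extension and the identity $[u_{kl}]=[u]$ would fail; it is precisely their omission that makes the estimate go through. Consequently
\[
\sum_{e\subset\Gamma_{kl}^\delta\cup(\partial\Omega\cap\partial\Gamma_{kl}^\delta)}S_h\int_e[u_{kl}]^2\,ds
=\sum_{e\subset\Gamma_{kl}^\delta\cup(\partial\Omega\cap\partial\Gamma_{kl}^\delta)}S_h\int_e[u]^2\,ds.
\]

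Finally I would observe that the edge set appearing here is a subset of all edges contained in $\overline{\Omega}_k\cup\overline{\Omega}_l$ (interface edges on $\Gamma_{kl}$ lie in both closures and are included), and that $S_h>0$ with nonnegative integrands, so the last display is bounded by $\sum_{e\subset\overline{\Omega}_k\cup\overline{\Omega}_l}\int_e S_h[u]^2\,ds$. Adding the gradient and edge estimates yields \eqref{eq:est-patchen-glenrg}. The whole argument is a termwise domination once the jump identity $[u_{kl}]=[u]$ on the retained edges is established; no inverse inequality or scaling is required at this stage.
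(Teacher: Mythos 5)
Your proof is correct and follows essentially the same route as the paper's: both arguments rest on the observation that the edge sum in $a_{kl}(\cdot,\cdot)$ excludes exactly the edges on $\partial\Gamma_{kl}^\delta\setminus\partial\Omega$, so that $u_{kl}$ agrees with $u$ (and $[u_{kl}]=[u]$) on every retained triangle and edge, after which the bound is a termwise domination by the right-hand side of \eqref{eq:est-patchen-glenrg}. The paper states this in two sentences; you have merely spelled out the same bookkeeping in full detail, which is a faithful (and welcome) elaboration rather than a different argument.
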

\begin{proof}
We first note that, in the bilinear form $a_{k l}(u,u)$, the edge integrals are taken over the edges of $\Gamma_{kl}^\delta$, that are either in the interior of the patch $\Gamma_{kl}^\delta$ or on the outer boundary $\partial \Omega$. Thus the proof follows directly from the definition of the bilinear forms as all triangle terms 
and edge terms of the form $a_{k l}$ are contained in the right hand side of (\ref{eq:est-patchen-glenrg}). 
\end{proof}

For $u\in V^h$, we introduce the following interpolation operators, the multiscale interpolant and the coarse interpolant, respectively, as
\begin{equation}
 I_{m s}: \ V^h \rightarrow V_{m s} \quad\mbox{and}\quad I_0: \ V^h \rightarrow V_0.
\end{equation}
For $u\in V^h$, let $I_{ms} u\in V_{ms}$ be the function such that 
$$
 I_{ms}u(x)=u(x) \quad \forall x\in\mathcal{V}(c_r)\; \forall c_r\in \Gamma,
$$
that is a multiscale function interpolating $u$ at the DG vertices $\nu(c_r)$
of all cross-points $c_r$ in $\Gamma$.
This yields that $u-I_{ms}u$ is zero at the DG vertices of crosspoints and thus 
$u-I_{ms}u$ restricted to a patch $\Gamma_{k l}^{\delta}$ is in the $V_v(\Gamma_{k l}^{\delta})$.

Next we define $I_0 u\in V_0$ on each patch $\Gamma_{k l}^h$ as 
\begin{equation}
 I_0 u =I_{ms} u+
  \Pi_m^{k l} (u - I_{ms} u ) \quad \mbox{on}\quad \overline{\Gamma_{k l}^{\delta}},
\end{equation}
with $\Pi_m^{k l}$ as defined in (\ref{eq:def_PI_kl}).
$I_0 u $ is extended inside as discrete harmonic.
We have the following bound for the coarse interpolant.
\begin{lemma}\label{lem:coarse-int-est}
For the coarse interpolant $I_0$ it holds that
\begin{equation}
 {a}(I_0u,I_0u) \lesssim \left(1+\max_{\Gamma_{k l}\subset \Gamma}\frac{1}{\lambda_{M+1}^{k l}}\right) a(u,u)  \quad\forall u\in V^h.
 \end{equation}
\end{lemma}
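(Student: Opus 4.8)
The plan is to avoid the naive route of applying Corollary~\ref{cor:disharmest} directly to $I_0u$ and then estimating $\|I_0u\|_{b_{kl}}$. That route is hopeless: under high contrast the weighted mass $h^{-2}\|\alpha^{1/2}I_0u\|_{L^2(\Omega_k^\delta)}^2$ of a coarse function is in general \emph{not} controlled by its energy (a near-constant jump mode on a patch where $\alpha$ is large is a witness), so such an estimate can never be contrast robust. Instead I would exploit the fact that $I_0u$ is discrete harmonic together with a carefully chosen competitor in the minimization principle (\ref{eq:min_prop_dhe}).

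Set $w=u-I_{ms}u$; on each patch one has $w\in V_v(\Gamma_{kl}^\delta)$, and by construction $I_0u=I_{ms}u+\Pi_M^{kl}w=u-(I-\Pi_M^{kl})w$ on $\overline{\Gamma_{kl}^\delta}$. I would then define $\tilde W=\sum_{\Gamma_{kl}\subset\Gamma}\tilde\Psi^{kl}$, where $\tilde\Psi^{kl}$ equals $(I-\Pi_M^{kl})w$ on $\Gamma_{kl}^\delta$, is zero on the remaining boundary layers, and is discrete harmonic inside the subdomains (exactly as the enrichment functions are extended). Since the boundary layers $\bigcup_k\Omega_k^\delta$ are covered by the disjoint patches, $u-\tilde W$ and $I_0u$ carry the \emph{same} values on all boundary layers; as $I_0u$ is discrete harmonic, (\ref{eq:min_prop_dhe}) then yields $a(I_0u,I_0u)\le a(u-\tilde W,u-\tilde W)\lesssim a(u,u)+a(\tilde W,\tilde W)$.

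It remains to bound $a(\tilde W,\tilde W)$ by $\max_{kl}(\lambda_{M+1}^{kl})^{-1}\,a(u,u)$. A subdomain coloring argument (each subdomain touches boundedly many patches) gives $a(\tilde W,\tilde W)\lesssim\sum_{kl}a(\tilde\Psi^{kl},\tilde\Psi^{kl})$. Each summand is treated by Corollary~\ref{cor:disharmest} applied to the discrete harmonic $\tilde\Psi^{kl}$, producing $a(\tilde\Psi^{kl},\tilde\Psi^{kl})\lesssim\|(I-\Pi_M^{kl})w\|_{b_{kl}}^2$. Here the eigenvalue estimate of Lemma~\ref{lem:PIM-est} converts this weighted $L^2$ term into energy at the cost $1/\lambda_{M+1}^{kl}$, namely $\|(I-\Pi_M^{kl})w\|_{b_{kl}}^2\le(\lambda_{M+1}^{kl})^{-1}\|(I-\Pi_M^{kl})w\|_{a_{kl}}^2\le(\lambda_{M+1}^{kl})^{-1}\|w\|_{a_{kl}}^2$. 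Finally, because $w$ is the $a_{kl}$-orthogonal projection of $u_{kl}$ onto $V_v(\Gamma_{kl}^\delta)$ (using $a_{kl}(I_{ms}u,\cdot)=0$ on $V_v$ from (\ref{eq:def-Vms})), one has $\|w\|_{a_{kl}}^2\le a_{kl}(u_{kl},u_{kl})$, and Lemma~\ref{lem:est-patchen-glenrg} with the same coloring gives $\sum_{kl}a_{kl}(u_{kl},u_{kl})\lesssim a(u,u)$. Combining the displays finishes the estimate.

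The main obstacle is exactly the first move: one must resist applying Corollary~\ref{cor:disharmest} to $I_0u$ itself, since that traps one with the uncontrollable $b_{kl}$-mass of the \emph{retained} low modes $\Pi_M^{kl}w$. The point of the minimization principle is that it lets us replace $I_0u$ by the competitor $u-\tilde W$, in which $u$ contributes its own energy directly and only the complementary part $(I-\Pi_M^{kl})w$ — whose $b_{kl}$-mass is provably bounded by $(\lambda_{M+1}^{kl})^{-1}$ times its energy — is ever passed through Corollary~\ref{cor:disharmest}. The one place demanding genuine care is verifying that $u-\tilde W$ and $I_0u$ truly coincide on every boundary layer, so that (\ref{eq:min_prop_dhe}) applies; this relies on the patches tiling $\bigcup_k\Omega_k^\delta$ and on the algebraic identity $u-(I-\Pi_M^{kl})w=I_{ms}u+\Pi_M^{kl}w$ holding patchwise.
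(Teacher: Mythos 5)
Your proof is correct, and its core is the same chain of estimates as the paper's: the only quantity ever pushed through Corollary~\ref{cor:disharmest} is $(I-\Pi_M^{kl})(u-I_{ms}u)$ (never $I_0u$ itself), its $b_{kl}$-mass is converted to energy at cost $1/\lambda_{M+1}^{kl}$ by Lemma~\ref{lem:PIM-est}, the term $I_{ms}u$ is stripped off by the $a_{kl}$-orthogonality coming from (\ref{eq:def-Vms}), and everything is assembled via Lemma~\ref{lem:est-patchen-glenrg} and Lemma~\ref{lem:equivBF}. The difference lies only in how you reduce to that quantity. The paper splits $u-I_0u=\mathcal{H}(u-I_0u)+\mathcal{P}(u-I_0u)$, bounds the $\mathcal{P}$-part by $\|u\|_a$ using that $\mathcal{P}$ is an $a$-orthogonal projection, and applies Corollary~\ref{cor:disharmest} once, globally, to the discrete harmonic function $\mathcal{H}u-I_0u$; you instead invoke the minimization property (\ref{eq:min_prop_dhe}) with the explicit competitor $u-\tilde W$ and then apply Corollary~\ref{cor:disharmest} patch by patch to the $\tilde\Psi^{kl}$, paying a coloring constant. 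These are two packagings of the same estimate: a discrete harmonic function is determined by its boundary-layer values (if it vanishes on all layers it lies in $\sum_k V_{k,0}$ and its energy is zero), so your $\tilde W$, being discrete harmonic and equal to $u-I_0u$ on every layer, coincides exactly with the paper's $\mathcal{H}u-I_0u$; both proofs ultimately bound the energy of the same function. What your version buys is that it bypasses the $\mathcal{H}/\mathcal{P}$ splitting and the orthogonality of $\mathcal{P}$; what it costs is the extra coloring step, which the paper's single global application of Corollary~\ref{cor:disharmest} avoids (though a bounded-neighbors constant enters both proofs anyway when $\sum_{kl}\|u\|_{a_{kl}}^2$ is reassembled into $a(u,u)$). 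Your diagnosis of the pitfall --- that applying Corollary~\ref{cor:disharmest} to $I_0u$ directly is doomed because of the retained modes $\Pi_M^{kl}w$ --- is precisely the issue the paper's proof is engineered around.
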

\begin{proof}
By a triangle inequality we get immediately that
$$
 \|I_0u\|_{{a}}\leq \|I_0u-u\|_{{a}}+\|u\|_{{a}}.
$$
It is suffices therefore to prove the bound for the first term on the right hand side of this inequality.
Define $w=u-I_0 u$ for $u\in V^h$. Note that $\mathcal{P}  w=\mathcal{P}  u$ and $\mathcal{H} w=\mathcal{H} u - \mathcal{H} I_0u=\mathcal{H} u-I_0 u$ as $I_0 u$ is discrete harmonic in the way described in Section \ref{sec:Prec}. Hence
$$
 \|I_0u-u\|_{a}= \|\mathcal{H} w\|_{a}+\|\mathcal{P}  w\|_{a}
  \leq \|\mathcal{H} u -I_0 u\|_{a}+\|\mathcal{P}  u\|_{a} \leq 
  \|\mathcal{H} u -I_0 u\|_{a}+\|u\|_{a}.
$$
We have used the fact that $\mathcal{P} $ is the orthogonal projection with respect to $a(\cdot,\cdot)$.

Next, we estimate $\|\mathcal{H} w\|_{a}=\|\mathcal{H} u -I_0 u\|_{a}$.
By Lemma~\ref{lem:equivBF} and Corollary~\ref{cor:disharmest} we have
\begin{eqnarray*}
 \|\mathcal{H} u -I_0 u\|_a&\lesssim& \|\mathcal{H} u -I_0 u\|_{\hat{a}}
 \lesssim \sum_{k=1}^N h^{-2} \|\alpha^{1/2}(\mathcal{H} u-I_0 u)\|_{L^2(\Omega_k^{\delta})}^2\\
 &\lesssim&
 \sum_{\Gamma_{k l}\subset \Gamma} h^{-2}\|\alpha^{1/2}(u-I_0 u)\|_{L^2(\Gamma_{k l}^{\delta})}^2
\end{eqnarray*}
Note that $u-I_0u=(I-\Pi_M^{k l})(u-I_{ms}u)\in V_v(\Gamma_{k l}^{\delta})$.
Hence by Lemma~\ref{lem:PIM-est} we get
\begin{eqnarray*}
 h^{-2}\|\alpha^{1/2}(u-I_0 u)\|_{L^2(\Gamma_{k l}^h)}^2& =&
 \|(I-\Pi_M^{k l})(u-I_{m s}u)\|_{b_{k l}}^2 \\
 &\leq& \frac{1}{\lambda_{M+1}^{k l}} 
 \|(I-\Pi_M^{k l})(u-I_{m s}u)\|_{a_{k l}}^2 \\ 
 &\leq& \frac{1}{\lambda_{M+1}^{k l}}  \|(u-I_{m s}u)\|_{a_{k l}}^2 \\
 &\leq&  \frac{1}{\lambda_{M+1}^{k l}} \|u\|_{a_{k l}}^2.
\end{eqnarray*}
 The last inequality follows from  the fact that $u-I_{m s} u$ restricted to 
 the patch $\Gamma_{k l}^{\delta}$ is $a_{k l}$-orthogonal to the space $V_v(\Gamma_{k l}^{\delta})$, what follows from the definitions of $V_{ms}$ and $I_{ms}$, cf. (\ref{eq:def-Vms}).
 
 Now, utilizing Lemma~\ref{lem:est-patchen-glenrg} for each patch, summing over the interfaces, and finally using Lemma~\ref{lem:equivBF}, the proof then follows.
\end{proof}

\begin{lemma}\label{lem:local-est}
Let $w_k$ be the restriction of $u-I_0u$ to $\overline{\Omega}_k$, and extended by zero to the other subdomains, then 
\begin{equation}
 \sum_{k=1}^N a(w_k,w_k) 
 \lesssim \left(1+\max_{\Gamma_{k l}\subset \Gamma\cap \partial\Omega_k}\frac{1}{\lambda_{M+1}^{k l}}\right) 
       a(u,u)
 \end{equation}
\end{lemma}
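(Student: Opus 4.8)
The plan is to verify the stable-decomposition assumption of the abstract Schwarz framework, namely to bound the total local energy of the components $w_k\in V_k$ arising in the splitting $u=I_0u+\sum_{k=1}^N w_k$. Writing $v:=u-I_0u$, observe that $w_k=v$ on the triangles of $\Omega_k$ and $w_k=0$ on every other triangle, so this is a genuine decomposition with each $w_k\in V_k$ and $\sum_k w_k=u-I_0u$. Throughout I would pass to the equivalent form $\hat a(\cdot,\cdot)$ via Lemma~\ref{lem:equivBF}, so that it suffices to estimate $\sum_k\hat a(w_k,w_k)$.

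First I would expand $\hat a(w_k,w_k)=\sum_{\tau\subset\Omega_k}\|\alpha^{1/2}\nabla v\|_{L^2(\tau)}^2+\sum_e\int_e S_h[w_k]^2\,ds$ and sort the edge contributions according to the location of $e$. On edges interior to $\Omega_k$ and on edges lying on $\partial\Omega$, the extension by zero does not interfere, so $[w_k]=[v]$ (respectively $w_k=v$); summing over $k$, these edge terms together with all the volume terms are contained in $\hat a(v,v)$. Since $a$ and $\hat a$ are equivalent and, by the triangle inequality and Lemma~\ref{lem:coarse-int-est}, $\|v\|_a=\|u-I_0u\|_a\leq\|u\|_a+\|I_0u\|_a\lesssim(1+\max_{\Gamma_{kl}}1/\lambda_{M+1}^{kl})^{1/2}\|u\|_a$, this \emph{interior} part is already bounded by $(1+\max_{\Gamma_{kl}}1/\lambda_{M+1}^{kl})\,a(u,u)$.

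The main obstacle is the remaining contribution from the interface edges $e\subset\Gamma_{kl}$: there the extension by zero replaces the jump $[v]^2=(v_+-v_-)^2$ by the one-sided trace $[w_k]^2=v_+^2$ (and $[w_l]^2=v_-^2$ from the neighbour), so these terms are \emph{not} controlled by $\hat a(v,v)$, and this is exactly where the spectral enrichment must enter. I would bound each one-sided trace by the inverse inequality of Lemma~\ref{lem:IPinvineq}, namely $\int_e S_h[w_k]^2\lesssim h^{-2}\int_{\tau_+}\alpha\,v^2$, and sum over the edges of $\Gamma_{kl}$ to obtain $\sum_{e\subset\Gamma_{kl}}\int_e S_h([w_k]^2+[w_l]^2)\lesssim h^{-2}\|\alpha^{1/2}v\|_{L^2(\Gamma_{kl}^\delta)}^2=\|v\|_{b_{kl}}^2$. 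The crucial point, established already in the proof of Lemma~\ref{lem:coarse-int-est}, is that $v$ restricted to the patch equals $(I-\Pi_M^{kl})(u-I_{ms}u)\in V_v(\Gamma_{kl}^\delta)$; Lemma~\ref{lem:PIM-est} then gives $\|v\|_{b_{kl}}^2\leq\frac{1}{\lambda_{M+1}^{kl}}\|(I-\Pi_M^{kl})(u-I_{ms}u)\|_{a_{kl}}^2\leq\frac{1}{\lambda_{M+1}^{kl}}\|u\|_{a_{kl}}^2$, where the last inequality uses the $a_{kl}$-orthogonality of $u-I_{ms}u$ to $V_v(\Gamma_{kl}^\delta)$.

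Finally I would collect the two parts. Summing the interface estimate over all interfaces and extracting the worst factor yields $\sum_{\Gamma_{kl}}\frac{1}{\lambda_{M+1}^{kl}}\|u\|_{a_{kl}}^2\leq\big(\max_{\Gamma_{kl}}1/\lambda_{M+1}^{kl}\big)\sum_{\Gamma_{kl}}\|u\|_{a_{kl}}^2$, and Lemma~\ref{lem:est-patchen-glenrg} combined with a finite-overlap (coloring) argument on the patches gives $\sum_{\Gamma_{kl}}\|u\|_{a_{kl}}^2\lesssim a(u,u)$. Adding this to the interior part produces the asserted factor $(1+\max_{\Gamma_{kl}}1/\lambda_{M+1}^{kl})$. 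The only genuinely delicate bookkeeping is at the interfaces: one must confirm that the patch $b_{kl}$-norm exactly absorbs the one-sided interface traces and that each subdomain feeds into only boundedly many patch forms, so that the global energy $a(u,u)$ is not over-counted.
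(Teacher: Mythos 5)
Your proposal is correct and follows essentially the same route as the paper: pass to $\hat a$ via Lemma~\ref{lem:equivBF}, split the edge terms of $\hat a(w_k,w_k)$ by location, absorb the interior and outer-boundary terms into $\hat a(u-I_0u,u-I_0u)$, and handle the one-sided interface traces with Lemma~\ref{lem:IPinvineq}, the identification $u-I_0u=(I-\Pi_M^{kl})(u-I_{ms}u)\in V_v(\Gamma_{kl}^{\delta})$, Lemma~\ref{lem:PIM-est}, and Lemma~\ref{lem:est-patchen-glenrg}. The only cosmetic difference is that you bound the interior part by invoking the conclusion of Lemma~\ref{lem:coarse-int-est} through a triangle inequality, whereas the paper repeats the argument of that lemma's proof; the two are equivalent.
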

\begin{proof}
Using Lemma~\ref{lem:equivBF} we get 
\begin{eqnarray*}
 a(w_k,w_k)&\lesssim& \hat{a}(w_k,w_k)\\
 &=& \sum_{\tau\subset \Omega_k} \|\alpha^{1/2}\nabla w_k\|_{L^2(\tau)}^2
 +\sum_{e\subset \Omega_k} \|S_h^{1/2}[w_k]\|_{L^2(e)}^2 \\
 &+&\sum_{e\subset \partial \Omega\cap\partial \Omega_k} \|S_h^{1/2}[w_k]\|_{L^2(e)}^2
 +\sum_{e\subset \Gamma\cap\partial \Omega_k} \|S_h^{1/2}[w_k]\|_{L^2(e)}^2,
\end{eqnarray*}
where the second sum in the right hand side is over all edges of elements $\tau\subset \Omega_k$ that are not on the boundary
of $\Omega_k$.

The first three terms, when summed over $k=1,\ldots,N$, yield
\begin{eqnarray*}
&&\sum_{k=1}^N\left(\sum_{\tau\subset \Omega_k} \|\alpha^{1/2}\nabla w_k\|_{L^2(\tau)}^2
 +\sum_{e\subset \Omega_k} \|S_h^{1/2}[w_k]\|_{L^2(e)}^2 +\sum_{e\subset \partial \Omega\cap\partial \Omega_k} \|S_h^{1/2}[w_k]\|_{L^2(e)}^2\right)\\
 &&\quad =\sum_{k=1}^N\left(
 \sum_{\tau\subset \Omega_k} \|\alpha^{1/2}\nabla (u-I_0 u)\|_{L^2(\tau)}^2
 +\sum_{e\subset \Omega_k\cup(\partial \Omega\cap\partial \Omega_k)} \|S_h^{1/2}[u-I_0 u]\|_{L^2(e)}^2\right)\\
 && \quad
 \leq \hat{a}(u-I_0u,u-I_0 u), 
\end{eqnarray*}
since all terms in the left of inequality are also in the right. We can bound this term in the same way as in the proof of Lemma~\ref{lem:coarse-int-est}.
 
 It remains to bound the fourth term, that is the sum of integrals over the edges that are on $\Gamma$.
 Let $e\subset \Gamma \cap \partial \Omega_k$ be an edge on the interface $\Gamma_{k l}$, so it is the
 common edge of two triangles  $\tau_+\subset \Omega_k^{\delta}$ and $\tau_-\subset \Omega_l^{\delta}$.  Note that on $\tau_-$ the function $w_k$ is zero, hence, 
 by Lemma~\ref{lem:IPinvineq}, we get 
 \begin{eqnarray*}
  \sum_{e\subset \Gamma\cap\partial \Omega_k} \|S_h^{1/2}[w_k]\|_{L^2(e)}^2
  & \lesssim& h^{-2}\|\alpha^{1/2}(u-I_0u)\|_{L^2(\Omega_k^{\delta})}^2\\
  & \lesssim & \sum_{\Gamma_{k l} \subset \Gamma \cap \partial\Omega_k}
   h^{-2}\|\alpha^{1/2}(u-I_0u)\|_{L^2(\Gamma_{k l}^{\delta})}^2.
 \end{eqnarray*}
This term was already bounded in the proof of Lemma~\ref{lem:coarse-int-est}. Following the lines of proof there, we end the proof of the lemma here.
\end{proof}

\begin{lemma}\label{lem:stable_dec}
For any $u \in V^h$ there is a stable decomposition, that is, there exists a coarse function $u_0\in V_0$ and local functions $u_k \in V_k$ for $k=1,\ldots,N,$ such that $u=u_0+\sum_{k=1}^N u_k$, and
\begin{equation}
 a(u_0,u_0) + \sum_{k=1}^N a(u_k,u_k)\leq \left(1+\max_{\Gamma_{k l} \subset \Gamma} \frac{1}{\lambda_{M+1}^{k l}}\right)\lesssim a(u,u). \label{eq:stable-dec-bound}
\end{equation} 
\end{lemma}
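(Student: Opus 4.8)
The plan is to exhibit the decomposition explicitly using the coarse interpolant $I_0$ introduced above, and then to invoke the two energy bounds already established in Lemmas~\ref{lem:coarse-int-est} and \ref{lem:local-est}. Concretely, I would set $u_0 := I_0 u \in V_0$ and, for $k = 1, \ldots, N$, take $u_k := w_k$, the restriction of $u - I_0 u$ to $\overline{\Omega}_k$ extended by zero to the remaining subdomains, i.e. exactly the functions appearing in Lemma~\ref{lem:local-est}. By the definition of $V_k$ as the space of functions vanishing on every fine triangle of $\overline{\Omega}_l$ with $l \neq k$, each $w_k$ lies in $V_k$, so the proposed decomposition is admissible.

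The first step is to verify that this decomposition is \emph{exact}, that is $u = u_0 + \sum_{k=1}^N u_k$. This is the one place where the discontinuous Galerkin structure is essential: every DG vertex in $\overline{\Omega}_h$ is attached to a single fine triangle, and every fine triangle belongs to exactly one subdomain $\Omega_k$, the subdomains being disjoint unions of triangles. Hence the operators ``restrict to $\overline{\Omega}_k$ and extend by zero'' partition the degrees of freedom of $V^h$, and in particular $\sum_{k=1}^N w_k = u - I_0 u$ with no double counting across interfaces, since the triangles $\tau_+ \subset \Omega_k$ and $\tau_- \subset \Omega_l$ adjacent along $\Gamma_{kl}$ are distinct and carry distinct DG nodes. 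Therefore $u_0 + \sum_k u_k = I_0 u + (u - I_0 u) = u$.

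With exactness in hand, the energy bound follows by addition. Lemma~\ref{lem:coarse-int-est} gives $a(u_0, u_0) = a(I_0 u, I_0 u) \lesssim (1 + \max_{\Gamma_{kl} \subset \Gamma} 1/\lambda_{M+1}^{kl})\, a(u,u)$, while Lemma~\ref{lem:local-est} gives $\sum_{k=1}^N a(u_k, u_k) = \sum_{k=1}^N a(w_k, w_k) \lesssim (1 + \max_{\Gamma_{kl} \subset \Gamma} 1/\lambda_{M+1}^{kl})\, a(u,u)$, where the per-subdomain maximum over $\Gamma_{kl} \subset \Gamma \cap \partial\Omega_k$ is bounded by the global maximum over all $\Gamma_{kl} \subset \Gamma$. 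Summing the two estimates yields (\ref{eq:stable-dec-bound}).

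I expect no serious obstacle: once $I_0 u$ and the local pieces $w_k$ are identified as the correct objects, the argument is essentially bookkeeping. The only point demanding genuine care, and the one place the DG setting departs from the conforming case, is the exactness of the decomposition, namely confirming that restriction-and-zero-extension over the subdomains reconstructs $u - I_0 u$ without the interface overlap that complicates continuous finite element decompositions. All of the analytic difficulty, in particular the contrast-robust factor $1/\lambda_{M+1}^{kl}$, has already been discharged in the preceding lemmas through the spectral projection $\Pi_M^{kl}$ and Corollary~\ref{cor:disharmest}.
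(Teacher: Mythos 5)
Your proposal is correct and takes essentially the same route as the paper's own proof: the identical choice $u_0 = I_0 u$ and $u_k = (u - I_0 u)_{|\overline{\Omega}_k}$ extended by zero, exactness of the decomposition from the disjointness of the subdomain triangulations, and the bound by combining Lemmas~\ref{lem:coarse-int-est} and~\ref{lem:local-est}. Your added discussion of why the DG degrees of freedom prevent interface double counting merely spells out what the paper's proof treats as immediate.
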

\begin{proof}

For $u\in V^h$, define its coarse component as the coarse interpolant of $u$, $u_0=I_0u \in V_0$. The local decomposition $u-u_0$ as given as
$$
u_k=(u-u_0)_{|\overline{\Omega}_k} \in V_k, \qquad k=1,\ldots,N,
$$ 
is defined uniquely. Accordingly, $u_k$ equals to $u-u_0$ on all triangles of $\Omega_k$ and to zero on all triangles of the remaining subdomains.
Clearly, 
$$
 u_0+\sum_{k=1}^Nu_k=u_0+\sum_{k=1}^N(u-u_0)_{|\overline{\Omega}_k}= u_0 + u - u_0=u.
$$
The stability estimate (\ref{eq:stable-dec-bound}) follows from the lemmas \ref{lem:coarse-int-est}-\ref{lem:local-est}.
\end{proof}

\section{Numerical Results}\label{sec:NumRes}
\begin{figure}[htb]
\centering
\includegraphics[height=7cm]{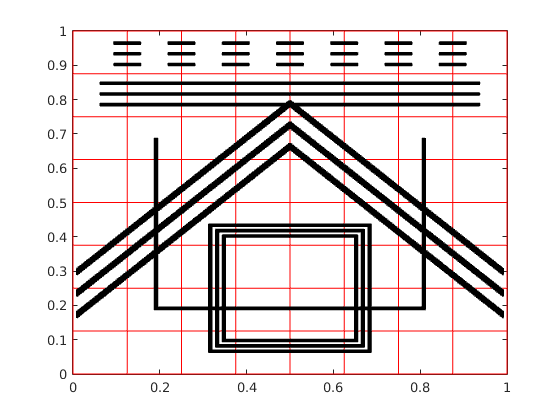}
\caption{The unit square domain with $8\times 8$ square subdomains. The distribution, symmetric around $x=0.5$, has channels and inclusions (smaller channels) in a complex network. The coefficient $\alpha$ equals one on the background and $\alpha_0$ on inclusions and channels.}\label{Fig:Ex22}
\end{figure}We consider our model problem (\ref{eq:modelproblem}) to be defined on the unit square with zero Dirichlet boundary condition and a constant force function, and solve it using the symmetric interior penalty discontinuous Galerkin (SIPG) discretization (cf. \cite{Riviere:2008:DGM}) and the conjugate gradients iteration with the additive Schwarz method of this paper as the preconditioner. 
In all our experiments, the penalty parameter $\eta$ is set equal to 4, and the iteration stops as soon as the relative residual norm becomes less than $10^{-6}$. For the multiscale problem, we have chosen high contrast media which are represented by the values and jumps of the coefficient $\alpha$, as shown in the figures \ref{Fig:Ex22}--\ref{Fig:ndist}. In all these figures, $\alpha$ equals one on the background and (one or a much higher value) $\alpha_0$ on the inclusions and channels. The value of $\alpha_0$ hence defines the jump or the contrast in $\alpha$. 

As described in the paper, our coarse space has one spectral component whose basis functions are related to the first few eigenfunctions of the generalized eigenvalue problems on the patches, and one non-spectral component whose basis functions are associated with the degrees of freedom associated with the cross points. For the spectral component, a threshold is set for the eigenvalues; eigenfunctions corresponding to eigenvalues below the threshold are then chosen to construct the basis functions for the spectral component. For the non-spectral component, to keep the number of basis functions to a minimal, we include precisely one basis function per crosspoint per patch, as a consequence, two basis functions of non-spectral type per patch. It should be noted here that this rearrangement of the algorithm does not change the outcome of our theory.
\begin{table}[htb]
\begin{center}
\begin{tabular}{|c|c|c|c|c|}
\hline
\multicolumn{1}{|c|}{} & \multicolumn{3}{|c|}{Fixed number of enrichment} & \multicolumn{1}{|c|}{Adaptive enrich.}\\
\cline{2-4}
\multicolumn{1}{|c|}{$\alpha_0$} & \multicolumn{1}{|c|}{ none } & \multicolumn{1}{|c|}{ 2 }  & \multicolumn{1}{|c|}{ 4 } & \multicolumn{1}{|c|}{$\lambda=0.18$}\\
\hline
$10^0$& $5.73\times10^1 ~(~53)$  & $1.57\times10^1 ~(31)$ & $ ~9.64 ~(24)$ & $ 15.65 ~(31)$   \\
$10^2$& $3.34\times10^2 ~(141)$  & $7.68\times10^1 ~(70)$ & $14.76 ~(33)$ & $ 22.65 ~(41)$   \\
$10^4$& $2.87\times10^4 ~(401)$  & $1.42\times10^2 ~(83)$ & $ 14.48 ~(35)$ & $ 22.67 ~(45)$   \\
$10^6$& $2.86\times10^6 ~(832)$  & $1.43\times10^2 ~(79)$ & $14.34 ~(36)$ & $ 22.66 ~(46)$  \\
\hline
\end{tabular}
\end{center}
\caption{Numerical results of the first experiment showing condition number estimates and iteration counts (in parentheses), corresponding to problem of Fig.~\ref{Fig:Ex22}. As we continue to enrich the spectral component of the coarse space with additional functions, the condition number improves, and becomes independent of the variations in $\alpha$ when all the bad eigenvalues have been used. For the adaptive enrichment $\lambda$ indicates the threshold; eigenvalues below the threshold are chosen for the enrichment.}
\label{tab:distribution}
\end{table} 
\begin{figure}[htb]
\centering
\includegraphics[height=7cm,width=9.3cm]{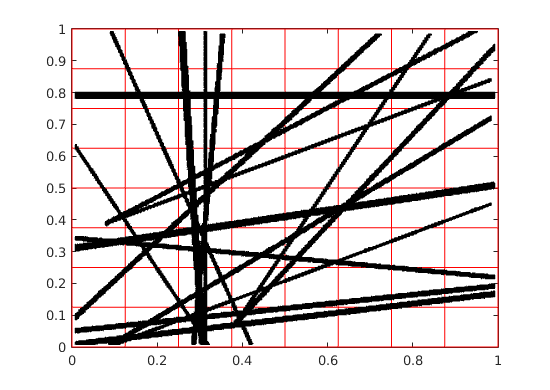}
\caption{The unit square domain with $8\times 8$ square subdomains and a complex distribution of $\alpha$ with channels crossing each other, and stretching across subdomains. The coefficient $\alpha$ equals one on the background and $\alpha_0$ on channels.}\label{Fig:Ex12}
\end{figure}

In our first two experiments, we decompose our unit square domain into $8\times8=64$ non-overlapping square subdomains, each with a regular triangulation of $2\times16\times16$ triangles resulting in a total of 32768 triangles. There are a total of $112$ patches, each corresponding to the interior edges, and $49$ cross points, each being connected to the four different patches meeting at the point. 

For the first experiment, we consider a complex distribution of the coefficient $\alpha$, symmetric around $x=0.5$, which includes both channels and inclusions (smaller channels) across subdomains, cf. Fig.~\ref{Fig:Ex22}. The results are shown in Table \ref{tab:distribution}, reporting for each test case a condition number estimate and the corresponding number of iterations required to converge. The rows in the table correspond to different magnitudes of the jumps in the coefficient. The columns correspond to different types of enrichment of the spectral component of the coarse space, either enrichment by a fixed number of eigenfunctions on every patch, see columns~1--3, or a fully adaptive enrichment where a threshold of $0.18$ for the corresponding eigenvalues have been used, see column~$4$. This choice of threshold resulted in a total of $140$ eigenfunctions for the jump $\alpha_0=1$, and $212$ eigenfunctions for the other jumps.

For the second experiment, the distribution of the coefficient is as shown in Figure~\ref{Fig:Ex12} where long channels are intersecting randomly. We have the same experimental setup as in the first experiment, with the same threshold and $\alpha$ values. The results are presented in Table \ref{tab:distribution2}. As before the rows correspond to the different magnitudes of $\alpha_0$ (the jump in $\alpha$), and the columns correspond to either fixed number of enrichment or a fully adaptive enrichment. In the case of the fully adaptive coarse space, a total of $140$ eigenfunctions are generated for $\alpha_0=1$, $227$ eigenfunctions for $\alpha_0=10^2$, and $234$ eigenfunctions for $\alpha_0\in\{10^4, 10^6\}$.

\begin{table}[htb]
\begin{center}
\begin{tabular}{|c|c|c|c|c|}
\hline

\multicolumn{1}{|c|}{} & \multicolumn{3}{|c|}{Fixed number of enrichment} & \multicolumn{1}{|c|}{Adaptive enrich.}\\
\cline{2-4}
\multicolumn{1}{|c|}{$\alpha_0$} & \multicolumn{1}{|c|}{ none } & \multicolumn{1}{|c|}{ 2 }  & \multicolumn{1}{|c|}{ 4 } & \multicolumn{1}{|c|}{$\lambda=0.18$}\\
\hline
$10^0$& $5.73\times10^1 ~(~53)$ & $1.57\times10^1 ~(31)$ & $ ~9.64 ~(24)$ & $ 15.65 ~(31)$   \\
$10^2$& $3.18\times10^2 ~(124)$ & $1.87\times10^2 ~(80)$ & $19.17 ~(37)$  & $ 27.90 ~(44)$  \\
$10^4$& $2.98\times10^4 ~(328)$ & $1.01\times10^4 ~(122)$ & $22.04 ~(39)$ & $ 28.57 ~(47)$  \\
$10^6$& $2.98\times10^6 ~(508)$ & $2.96\times10^4 ~(131)$ & $22.07 ~(42)$ 	& $ 28.56 ~(50)$  \\
\hline
\end{tabular}
\end{center}
\caption{Numerical results of the second experiment showing condition number estimates and iteration counts (in parentheses), corresponding to the problem of Fig.~\ref{Fig:Ex12}. As we continue to enrich the spectral component of the coarse space with additional functions, the condition number improves, and becomes independent of the variations in $\alpha$ when all the bad eigenvalues have been used. For the adaptive enrichment $\lambda$ indicates the threshold; eigenvalues below the threshold are chosen for the enrichment.}
\label{tab:distribution2}
\end{table}

In both experiments above, the number of eigenfunctions used for the fixed enrichment of 4 has been larger than that required for the adaptive enrichment. This fact is reflected in their condition number estimates, as we can see it from the tables that the condition number estimates in the fixed enrichment case of 4, are smaller than those in the adaptive enrichment case. The important observation, however, is that, in the adaptive enrichment case, with a fixed $\lambda$ (an adequately chosen threshold to include the bad eigenvalues), the condition number estimates are independent of the high contrast.

\begin{figure}[htb]
\centering
\includegraphics[height=5cm,width=8.3cm]{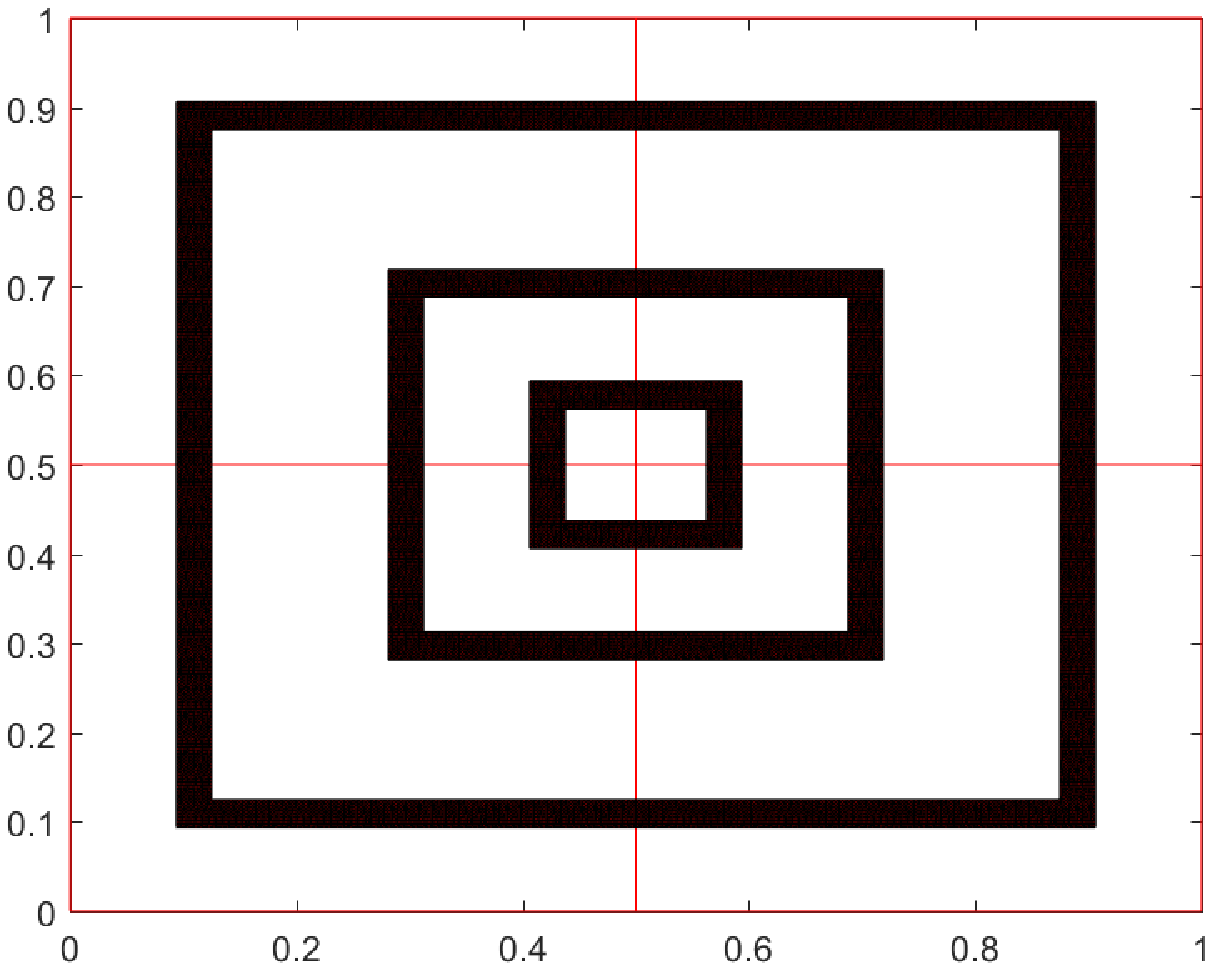}
\caption{The unit square domain decomposed into four square subdomains. The distribution of $\alpha$ (background with closed channels) is such that the eigenvalue problems on the edges are identical. The coefficient $\alpha$ equals one on the background and $\alpha_0$ on the channels.}\label{Fig:ndist}
\end{figure} 
\begin{table}[htp]
\begin{center}
\begin{tabular}{|c|c|c|c|}
\hline
Eigenvalue
&$h=1/32$ 	& $h=1/64$ 	&  $h=1/128$ 	\\
  \hline	
	$\lambda_1$
	&$8.67\times10^{-8}$	&$2.29\times10^{-8}$
	&$5.89\times10^{-9}$\\
	$\lambda_2$
	&$5.23\times10^{-7}$	&$1.37\times10^{-7}$
	&$3.50\times10^{-8}$\\
	$\lambda_3$
	&$1.32\times10^{-6}$	&$3.39\times10^{-7}$
	&$8.66\times10^{-8}$\\
   \hline
	$\lambda_4$
	&$0.3690$		
	&$0.0916$			
	&$0.0229$		\\
	$\lambda_5$
	&${\bf 0.5327}$
	&$0.1320$		
	&$0.0329$		\\
	$\lambda_6$
	&
	&$0.3686$			
	&$0.0916$		\\
	$\lambda_7$
	&
	&${\bf 0.5327}$		
	&$0.1320$		\\
	$\lambda_8$
	&
	&
	&$0.1808$		\\
	$\lambda_9$
	&
	&
	&$0.2066$		\\
    $\lambda_{10}$
    &
	&
	&$0.2981$		\\
	$\lambda_{11}$
	&
	&
	&$0.3638$		\\
	$\lambda_{12}$
	&
	&
	&${\bf 0.5327}$		\\
	\hline
\end{tabular}
\end{center}
\caption{Smallest eigenvalues corresponding to the generalized eigenvalue problem on each patch of the problem of Fig.\ref{Fig:ndist} for the three different mesh sizes. The eigenvalues in boldface indicate the first eigenvalue above the threshold $\lambda=0.415$.} 
\label{tab:eigenvalues}
\end{table}

For our third experiment, we choose a smaller decomposition of the domain, with $2\times 2$ subdomains, giving a total of four patches, and a $\alpha$ distribution which is symmetric in a way such that the eigenvalue problems on the four patches become identically the same, cf. Fig.~\ref{Fig:ndist}. 
Eigenvalues below the threshold value of $\lambda=0.415$ are shown in Table~\ref{tab:eigenvalues}, for $h=1/32$ and its two levels of refinement, and $\alpha_0=10^6$. We observe that the three channels here result in three eigenvalues (the bad eigenvalues) several magnitudes lower than the remaining eigenvalues. It has also been observed in other papers, that there is a direct correlation between the number of bad eigenvalues and the number of channels with large coefficients. For the adaptive coarse space case, we set a threshold so that only a small number of eigenvalues are below the threshold (enough to include all the bad eigenvalues), e.g., $\lambda=0.415$ is enough to include the bad eigenvalues plus one eigenvalue for the starting mesh size $h=1/32$. The eigenvalues in boldface, in Table~\ref{tab:eigenvalues}, indicate the next lowest eigenvalue above the threshold of $\lambda=0.415$. 

In Table~\ref{tab:distribution3} we present the numerical results for the third experiment. The rows in the table correspond to the mesh size parameter $h$ and the jump in the coefficient $\alpha$, and the columns correspond to the fully adaptive enrichment for different values of threshold $\lambda$. For each test, a condition number estimate and its corresponding number of enrichment per patch (inside square brackets) are given. As seen from the table, for a fixed $\lambda$, the condition number estimates remain close to each other as the mesh size $h$ varies,  indicating that the condition number is independent of the mesh size parameter. Although, in this case, smaller mesh size implies more eigenfunctions to be included in the construction of the coarse space, the number of which appears to vary inversely with the mesh size, cf. Table~\ref{tab:eigenvalues}. Further, as we compare the condition number estimates for different values of $\alpha_0$ (the contrast) in the table, once again, we see that the number is independent of the contrast for fixed $\lambda$. Finally, to see how the condition number bound depends on $\lambda_{M+1}$ (i.e., the next lowest eigenvalue above the threshold $\lambda$), we refer to Fig.~\ref{Fig:condplot} where condition number estimates are plotted against the inverse of $\lambda_{M+1}$, for $\lambda=0.208$ in the Table~\ref{tab:distribution3}, indicating that the condition number is inversely proportional to $\lambda_{M+1}$.

The numerical results of this section are consistent with the theory. All our experiments demonstrate the effectiveness of our algorithm for problems with highly varying coefficients, showing it is robust with respect to the contrast as well as the distribution, in general, requiring only a modest number of spectral enrichment of the coarse space. This enrichment is based on solving positive definite generalized eigenvalue problems on thin patches, each covering a subdomain interface. The eigenvalue problems are therefore relatively small sized. Solving eigenvalue problem on a patch and then extending its eigenfunctions inside subdomains are two separate routines in our framework, which makes our algorithm less intricate, and more susceptible of code reuse.


\begin{table}[htb]
\begin{center}
\begin{tabular}{|l|l|c|c|c|c|}
\hline
\multicolumn{1}{|c|}{} &\multicolumn{1}{|c|}{} & \multicolumn{1}{|c|}{Enrichment:} & \multicolumn{3}{|c|}{Adaptive enrichment}\\
\multicolumn{1}{|c|}{$\alpha_0$} &\multicolumn{1}{|c|}{$h$} & \multicolumn{1}{|c|}{ none} & \multicolumn{1}{|c}{ $\lambda=0.208$ }  & \multicolumn{1}{c}{$\lambda=0.415$ } &
\multicolumn{1}{c|}{$\lambda=0.830$}\\
\hline
         &$1/32$& $6.02\times10^2 $ & $10.82 \quad [03]$ & $12.00 \quad [04]$ & $ ~7.61 \quad [05]$   \\
$10^2$  & $1/64$& $1.20\times10^3 $ & $14.80 \quad [05]$ & $11.79 \quad [06]$ & $ 10.21 \quad [08]$  \\
        &$1/128$& $2.39\times10^3 $ & $17.09 \quad [09]$ & $11.69 \quad [11]$ & $ ~9.93 \quad [16]$  \\
        &$1/256$& $4.77\times10^3 $ & $20.31 \quad [16]$ & $12.73 \quad [23]$ & $ 9.54 \quad [32]$  \\
\hline
        & $1/32$& $5.55\times10^4 $ & $13.06 \quad [03]$ & $12.51 \quad [04]$ & $ ~7.64 \quad [05]$   \\
$10^4$  & $1/64$& $1.11\times10^5 $ & $15.65 \quad [05]$ & $12.88 \quad [06]$ & $ 10.20 \quad [08]$  \\
        &$1/128$& $2.22\times10^5 $ & $17.07 \quad [09]$ & $12.88 \quad [11]$ & $ ~9.73 \quad [14]$  \\
        &$1/256$& $4.45\times10^5 $ & $17.68 \quad [19]$ & $13.98 \quad [23]$ & $ 9.83 \quad [30]$  \\
\hline
        & $1/32$& $5.54\times10^6 $ & $13.11 \quad [03]$ & $13.06 \quad [04]$ & $ ~7.92 \quad [05]$   \\
$10^6$  & $1/64$& $1.11\times10^7 $ & $15.68 \quad [05]$ & $13.09 \quad [06]$ & $ 11.03 \quad [08]$  \\
        &$1/128$& $2.22\times10^7 $ & $17.31 \quad [09]$ & $13.15 \quad [11]$ & $ 10.67 \quad [14]$  \\
        &$1/256$& $4.45\times10^7 $ & $17.83 \quad [19]$ & $14.13 \quad [23]$ & $ 10.67 \quad [30]$  \\
\hline
\end{tabular}
\end{center}
\caption{Numerical results of the third experiment, corresponding to Fig.~\ref{Fig:ndist}, showing condition number estimates and their corresponding numbers of enrichment (in brackets) for different values of $\lambda$ (threshold for the smallest eigenvalues) for the adaptive enrichment.}
\label{tab:distribution3}
\end{table}

\begin{figure}[!ht]
 \centerline{
 \AffixLabels{
 \includegraphics[height=6cm,width=7cm]{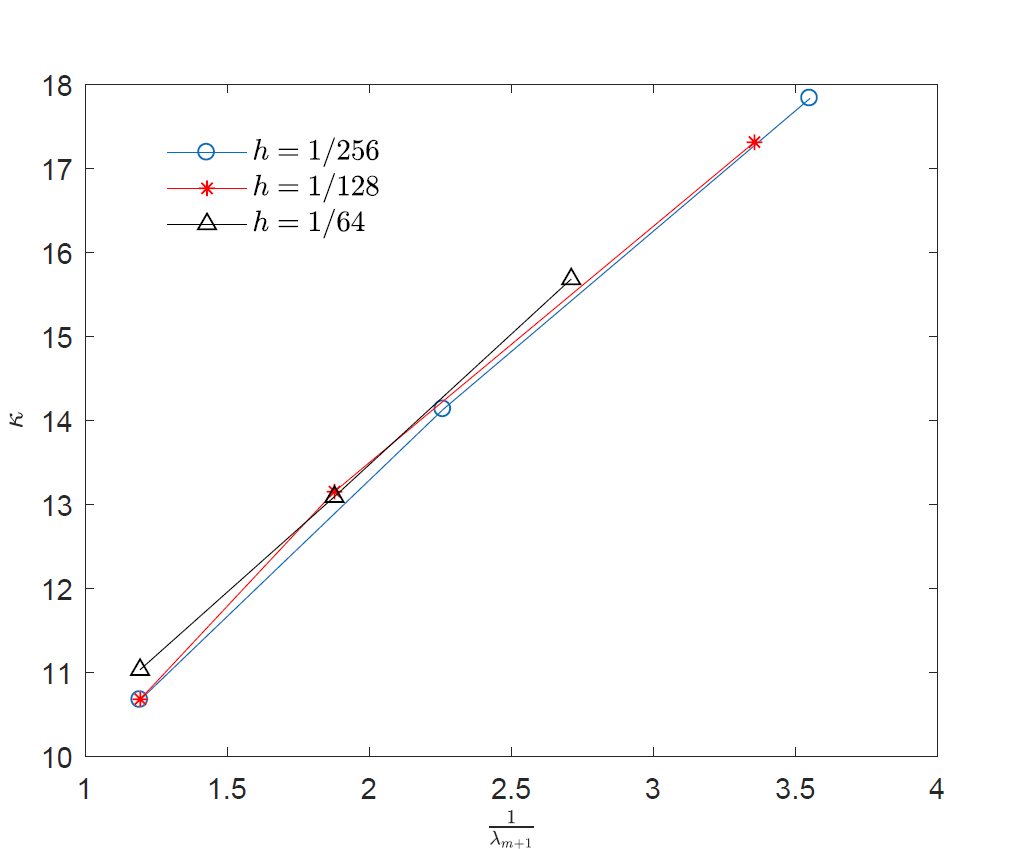}
 }}
 \caption{Plot of condition number estimates $\kappa$ (along the $y$-axis) against ${1}/{\lambda_{m+1}}$ (along the $x$-axis), for the threshold $\lambda=0.208$ and contrast $\alpha_0=10^6$, cf. Table \ref{tab:distribution3}. $\lambda_{m+1}$ is the next smallest eigenvalue above the threshold.}\label{Fig:condplot}
\end{figure}




\section*{Acknowledgments}
Leszek Marcinkowski  was partially supported by Polish Scientific Grant: National Science Center:  2016/21/B/ST1/00350.


\end{document}